\definecolor{shadecolor}{RGB}{241,231,64}
\newcommand{\nc}{\newcommand}
\newtheorem{thm}{Theorem}
\theoremstyle{plain}
\nc{\bthm}{\begin{thm}} \nc{\ethm}{\end{thm}}
\newtheorem{prop}[thm]{Proposition}
\nc{\bprp}{\begin{prop}} 
	\nc{\eprp}{\end{prop}}
\newtheorem{ques}[thm]{Question}
\newtheorem{fact}[thm]{Fact}
\nc{\bfct}{\begin{fact}} \nc{\efct}{\end{fact}}
\newtheorem{prob}[thm]{Problem}
\nc{\bprb}{\begin{prob}} \nc{\eprb}{\end{prob}}
\newtheorem{lem}[thm]{Lemma}
\nc{\blem}{\begin{lem}} \nc{\elem}{\end{lem}}
\newtheorem{claim}[thm]{Claim}
\nc{\bclm}{\begin{claim}} \nc{\eclm}{\end{claim}}
\newtheorem{cor}[thm]{Corollary}
\nc{\bcor}{\begin{cor}} \nc{\ecor}{\end{cor}}
\newtheorem{conj}[thm]{Conjecture}
\nc{\bcnj}{\begin{conj}} \nc{\ecnj}{\end{conj}}
\theoremstyle{definition}
\newtheorem{defn}[thm]{Definition}
\nc{\bdfn}{\begin{defn}} \nc{\edfn}{\end{defn}}
\newtheorem{observation}[thm]{Observation}
\nc{\bobs}{\begin{observation}} \nc{\eobs}{\end{observation}}
\theoremstyle{remark}
\newtheorem{rem}[thm]{Remark}
\nc{\brem}{\begin{rem}} \nc{\erem}{\end{rem}}
\newtheorem{cnv}[thm]{Convention}
\nc{\bcnv}{\begin{cnv}} \nc{\ecnv}{\end{cnv}}
\newtheorem{exam}[thm]{Example}
\nc{\bexm}{\begin{exam}} \nc{\eexm}{\end{exam}}
\nc{\bpf}{\begin{proof}} \nc{\epf}{\end{proof}}
\nc{\be}{\begin{enumerate}}
	\nc{\ee}{\end{enumerate}}
\nc{\bi}{\begin{itemize}}
	\nc{\itm}{\item}
	\nc{\ei}{\end{itemize}}
\nc{\invlim}{\lim_{\leftarrow}}
\nc{\dirlim}{\lim_{\rightarrow}}
\nc{\mm}{\mathbf{m}}
\nc{\nn}{\mathbf{n}}
\nc{\FF}{\mathcal{F}}
\nc{\CC}{\mathcal{C}}
\nc{\Span}{\operatorname{span}}
\nc{\Img}{\operatorname{Im}}
\nc{\rank}{\operatorname{rank}}
\nc{\proj}{\operatorname{proj}}
\nc{\F}{\mathbb{F}}
\nc{\Z}{\mathbb{Z}}
\nc{\Q}{\mathbb{Q}}
\nc{\Br}{\operatorname{Br}}
\begin{document}
	\title{Krull-Schmidt Theorem for small profinite groups}
	\author{Tamar Bar-On and Nikolay Nikolov}
	\maketitle 
	\begin{abstract}
		We prove that every small profinite group can be decomposed into a direct product of indecomposable profinite groups, and that such a decomposition is unique up to order and isomorphisms of the components. We also investigate the cancellation property of some free pro-$\mathcal{C}$ groups, and give a new criterion for a profinite group to be small.
	\end{abstract}
	\section{Introduction}
	We start by recalling the Krull-Schmidt Theorem for abstract groups (for a proof see for example \cite[Theorem 6.36]{rotman2012introduction}):
	\begin{thm}[Krull-Schmidt Theorem] \label{kst}
		Let $G$ be an abstract group which satisfies both the Ascending and the Descending chain condition on normal subgroups. Then there exist finitely many indecomposable abstract groups $A_1,...,A_n$ such that $G\cong A_1\times \cdots A_n$. Moreover, if $G\cong B_1\times \cdots \times B_m$ for indecomposable groups $B_1,...,B_n$, then $n=m$ and up to reindexing of the groups $B_1,...,B_n$,  $A_i\cong B_i$ for all $1\leq i\leq n$.
	\end{thm}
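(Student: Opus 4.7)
The plan is to handle existence and uniqueness separately, following the classical strategy.

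\textbf{Existence.} I would proceed by induction using either chain condition. If $G$ is already indecomposable, there is nothing to show. Otherwise write $G = A \times H$ with $A, H$ proper nontrivial factors; each inherits ACC and DCC on normal subgroups because normal subgroups of a direct factor are normal in $G$. Iterating on each factor must terminate, for otherwise one could extract an infinite strictly descending chain of direct factors of $G$, contradicting DCC.

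\textbf{Uniqueness.} The classical tool is Fitting's lemma for \emph{normal endomorphisms}, where an endomorphism $f$ of $G$ is normal if it commutes with every inner automorphism. First I would establish three algebraic facts: (i) for any normal endomorphism $f$ of a group with ACC$+$DCC on normal subgroups, one has $G = \ker f^N \times \Img f^N$ once $N$ is large enough; (ii) consequently, if $G$ is indecomposable, every normal endomorphism is either nilpotent or an automorphism; (iii) the \emph{local property}---if a pointwise sum $f_1 + \cdots + f_k$ of pairwise commuting normal endomorphisms of an indecomposable $G$ is an automorphism, then some $f_i$ itself is an automorphism. The normality hypothesis is what makes pointwise sums of endomorphisms again endomorphisms, since it forces the images of the $f_i$ to centralize one another.

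With (i)--(iii) in hand, I would run the Krull--Schmidt exchange argument. Given two indecomposable decompositions $G = A_1 \times \cdots \times A_n = B_1 \times \cdots \times B_m$, let $\pi_i,\iota_i$ and $\pi'_j,\iota'_j$ denote the associated projections and inclusions. Then
\[
\mathrm{id}_{A_1} \;=\; \pi_1 \iota_1 \;=\; \sum_{j=1}^m \pi_1 \iota'_j \pi'_j \iota_1,
\]
a sum of normal endomorphisms of the indecomposable group $A_1$. By (iii), after relabeling the $B_j$, the summand $j=1$ is an automorphism of $A_1$; this forces $\pi'_1 \iota_1 : A_1 \to B_1$ to be an isomorphism and one checks that $G = B_1 \times A_2 \times \cdots \times A_n$, reducing the problem to an induction on $n$.

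\textbf{Main obstacle.} The hard part will be establishing the local property (iii), which in the non-abelian setting requires both Fitting's lemma and a careful induction on $k$ using the dichotomy from (ii). Setting up normal endomorphisms so that pointwise sums genuinely define endomorphisms, and verifying that the specific endomorphisms arising in the exchange step fall within the scope of (iii), is where the bulk of the work lies. Once this infrastructure is in place, both the existence argument and the final peeling-off step in uniqueness are routine.
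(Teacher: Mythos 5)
The paper does not reprove Theorem~\ref{kst} (it is cited to Rotman); however the proof the paper gives of its profinite analogue, Theorem~\ref{Main Theorem}, runs exactly the strategy you outline --- Fitting's lemma for normal endomorphisms, the dichotomy ``nilpotent or automorphism'' for indecomposables, and the exchange step --- so your route is the intended one. Your existence argument via DCC is correct, and the skeleton of the uniqueness argument is right.

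There is, however, a genuine defect in the way you state and justify~(iii). ``Pairwise commuting normal endomorphisms'' is the wrong hypothesis, and the sentence ``the normality hypothesis \ldots\ forces the images of the $f_i$ to centralize one another'' is simply false: normality of $f$ and $g$ does not make the pointwise product $f\odot g$ a homomorphism, nor does it force $\Img f$ and $\Img g$ to commute elementwise. For instance $f=g=\mathrm{id}$ are normal and commute as maps, yet $f\odot g : a\mapsto a^2$ fails to be an endomorphism on any nonabelian group. What is true (recorded in Lemma~\ref{properties of normal}) is the converse: \emph{if} $f\odot g$ happens to be an endomorphism, \emph{then} $\Img f$ and $\Img g$ centralize each other and $f\odot g$ is again normal. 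Accordingly the correct hypothesis for the $k$-term local property, as in Corollary~\ref{sum of many normals implies an automorphism}, is that every partial sum $\bigodot_{j\in S}f_j$ is an endomorphism; this is also precisely what the induction from the two-term case needs. You should then check that this hypothesis holds automatically for the maps $f_j=\pi_1\iota'_j\pi'_j\iota_1$ in the exchange step: for any $S\subseteq\{1,\dots,m\}$, $\bigodot_{j\in S}\iota'_j\pi'_j$ is the projection of $G$ onto $\prod_{j\in S}B_j$, an endomorphism of $G$, and composing with $\iota_1$ and $\pi_1$ exhibits $\bigodot_{j\in S}f_j$ as an endomorphism of $A_1$. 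Finally, the step ``this forces $\pi'_1\iota_1:A_1\to B_1$ to be an isomorphism'' hides a second use of indecomposability: $f_1\in\mathrm{Aut}(A_1)$ immediately gives injectivity and a retraction onto $A_1$, and surjectivity onto $B_1$ then comes from observing that $\sigma=(\pi'_1\iota_1)(f_1^{-1}\pi_1\iota'_1)$ is a normal idempotent of $B_1$ with nontrivial image, so indecomposability forces $\sigma=\mathrm{id}_{B_1}$.
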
 
	Here by an indecomposable group we refer to a group $G$ which cannot be decomposed into a direct product of two nontrivial groups.
	
	The Krull-Schmidt Theorem obviously applies to finite groups, and in fact was first proved independently for finite groups by Wedderburn in 1909 (\cite{maclagan1909direct}). Observe that an infinite profinite group never satisfies the descending chain condition on its closed normal subgroups, and hence the Krull-Schmidt Theorem in this form cannot be applied for profinite groups.
	
	When considering profinite groups, we require all functions to be continuous. Hence, we say that a profinite group $G$ is indecomposable if whenever $G\cong A\times B$, an isomorphism in the category of profinite groups, then either $A$ or $B$ is a trivial group.
	
	A profinite group $G$ is said to be \emph{small} if for every $n \in \mathbb N$, $G$ has only finitely many open normal subgroups of index $n$. Examples of small groups are the finitely generated profinite groups (\cite[Proposition 2.5.1]{ribes2000profinite}). More generally the strongly complete profinite groups are small (\cite[Proposition 1]{smith2003subgroups}). It is worth mentioning that the class of small profinite groups is strictly bigger than the class of strongly complete profinite groups (\cite[Proposition 1]{segal2018remarks}).

	In this paper we prove a profinite version of the Krull-Schmidt Theorem for small profinite groups, and thus generalize the original statement of Wedderburn. Our argument is inspired by the proof that was given in the abstract case, however, we don't use the abstract, or even finite case, and the proof we give is self-contained.

	The main result of the paper is the following:
	\begin{thm}\label{Main Theorem}
		Let $G$ be a small profinite group. Then there exist countably many nontrivial indecomposable profinite groups $\{G_n\}_{ n\in I}$ with $I \subseteq \mathbb{N}$ such that $G\cong \prod_{n \in I} G_n$. Moreover, assume that 	$G\cong \prod_I H_i\cong \prod_J G_j$ are two decompositions of $G$ into direct products of nontrivial indecomposable profinite groups, then there is a bijective correspondence $h:I\to J$ such that for every $i\in I$, $H_i\cong G_{h(i)}$.   
	\end{thm}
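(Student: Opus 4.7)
The plan is to reduce to the classical Krull--Schmidt theorem (Theorem \ref{kst}) by working through the countable tower of finite quotients of $G$. Since $G$ is small, it has only countably many open normal subgroups, so there is a descending cofinal sequence $G = N_0 \supseteq N_1 \supseteq N_2 \supseteq \cdots$ with $\bigcap_k N_k = 1$, and hence $G \cong \varprojlim_k G/N_k$ with each $G/N_k$ finite. The whole argument will be orchestrated around projecting decompositions of $G$ onto these finite quotients, where Theorem \ref{kst} applies, and then lifting back through the inverse limit.

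For existence, the first step is to check that smallness forces the index set $I$ to be countable: if $G \cong \prod_{i \in I} H_i$ with each $H_i$ nontrivial, each $H_i$ admits an open normal subgroup of some finite index $n_i \geq 2$; were $I$ uncountable, pigeonhole would make some class $\{i : n_i = n\}$ uncountable, producing uncountably many open normal subgroups of $G$ of index at most $n$, contradicting smallness. To actually construct the decomposition, I would apply Zorn's lemma to the family of collections $\{A_i\}$ of nontrivial closed direct factors of $G$ whose internal product is an injective direct product embedded in $G$, ordered by refinement. A maximal element automatically has each $A_i$ indecomposable; the harder step is to show that the product exhausts $G$. I would establish this by projecting $\prod_i A_i$ onto each $G/N_k$ and combining maximality with the uniqueness part of classical Krull--Schmidt on $G/N_k$ to rule out a nontrivial closed complement.

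For uniqueness, given $G \cong \prod_{i \in I} H_i \cong \prod_{j \in J} G_j$, each projection to $G/N_k$ yields a pair of direct-product decompositions of the finite group $G/N_k$, and by Theorem \ref{kst} each of these refines uniquely to the canonical Krull--Schmidt decomposition of $G/N_k$. This assigns to every $H_i$ (resp.\ $G_j$) a ``support'' multiset of indecomposable finite factors of $G/N_k$. The desired bijection $h : I \to J$ is then the unique one pairing indices with coinciding supports at every level $k$; compatibility across levels shows $H_i \cong G_{h(i)}$ as profinite groups, since isomorphism at every finite quotient of a small profinite group suffices.

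The main obstacle is coherence of the matchings across the tower: at each finite level Theorem \ref{kst} gives uniqueness only up to reordering, and the reorderings at different levels might a priori conflict. I would overcome this either by a compactness / K\"onig's lemma argument on the finitely branching tree of partial matchings at successive levels, or, more structurally, by establishing a profinite analogue of Fitting's lemma — roughly, that a continuous endomorphism of an indecomposable small profinite group is either an automorphism or topologically nilpotent. Such a statement would yield a suitable ``local'' behaviour of endomorphism rings, letting the classical exchange proof of Krull--Schmidt uniqueness transfer to the profinite setting and simultaneously confirming that inverse limits of indecomposable finite factors remain indecomposable as profinite groups.
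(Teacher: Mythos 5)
Your second alternative for uniqueness---a profinite Fitting lemma for indecomposable small profinite groups---is in fact the route the paper takes (Lemma~\ref{automorphism or nilpotent}), so you have correctly located the key structural tool. But two ingredients you omit are essential. First, the Fitting dichotomy is false for arbitrary continuous endomorphisms of an indecomposable group; it is proved only for \emph{normal} endomorphisms (those with $af(b)a^{-1}=f(aba^{-1})$), and the whole apparatus of sums $\varphi\odot\psi$ of normal endomorphisms (Lemmas~\ref{properties of normal}, \ref{two} and Corollary~\ref{sum of many normals implies an automorphism}) is what lets one conclude that some projection $\pi_1\psi_j'\pi_j'\psi_1$ is an automorphism of $H_1$. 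Second, even after knowing that every $H_i$ is isomorphic to some $G_j$, matching \emph{multiplicities} is a separate problem, and the paper dispatches it by repeated use of cancellation of small profinite groups (Proposition~\ref{f.g groups are cancellable}), which your proposal never invokes. Your first alternative---a K\"onig's lemma argument on a finitely branching tree of level-by-level matchings---has a gap that is more serious than coherence across levels: it tacitly assumes that a valid partial bijection between $\{H_i\}$ and $\{G_j\}$ exists at each finite level. Classical Krull--Schmidt in $G/\overline{G^{n}}$ only says the multiset of indecomposable finite factors is well-defined up to isomorphism; it does not say those factors group compatibly into the blocks $H_i/\overline{H_i^{n}}$ versus $G_j/\overline{G_j^{n}}$. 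Ruling out a priori interleaving is exactly the content of the Fitting/cancellation argument, so the ``compactness'' route is not an independent shortcut.

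For existence the Zorn's-lemma outline is right in spirit but elides the hard step. That a maximal element has indecomposable factors and that its product is all of $G$ are both cheap; what is not obvious is that a \emph{chain} of decompositions has an upper bound, since factors may be split into ever finer pieces with no eventual stabilisation. The paper's proof of Theorem~\ref{decomposition} handles this by taking the inverse limit of the index sets, defining $A_i=\bigcap_{k}A_{\varphi_k(i)}$, and then using smallness through the subgroups $\overline{G^n}$ to bound uniformly along the chain the number of factors not contained in $\overline{G^n}$; this uniform bound is what makes the limiting family converge to $1$ and assemble into an isomorphism $\prod_I A_i\cong G$, and what makes the surjectivity and injectivity of the assembled map checkable modulo each $\overline{G^n}$. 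Your proposal mentions projecting onto finite quotients, but does not explain how this produces an upper bound for a chain, and without that the appeal to Zorn's lemma is unjustified.
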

	
	We will use the following characterisation of small profinite groups due to D. Segal. For a profinite group $G$ we denote by $\overline{G^m}$ the closure of the group $G^m= \langle g^m \ | \  \forall g \in G \rangle$ in $G$.
	
	\begin{thm}[{\cite[Theorem 1]{segal2018remarks}}]\label{segal characterization of small groups}
		A profinite group $G$ is small if and only if $\overline{G^m}$ is open in $G$ for every positive integer $m$.
	\end{thm}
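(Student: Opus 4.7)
The statement is an equivalence, and I would prove the two directions separately; essentially all of the work lies in the forward implication. For the reverse direction, assume $\overline{G^m}$ is open for every $m$ and, given $n$, observe that any open normal subgroup $N$ of $G$ of index $n$ contains $g^n$ for every $g \in G$ (since $G/N$ has order $n$), hence contains $G^n$, and, being closed, contains $\overline{G^n}$. Every open normal subgroup of index $n$ therefore corresponds to a normal subgroup of the finite quotient $G/\overline{G^n}$, of which there are only finitely many, so $G$ is small.

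For the forward direction, assume $G$ is small and fix $m$. It suffices to show $H := G/\overline{G^m}$ is finite. Observe that $H$ is small (the open normal subgroups of $H$ correspond bijectively with those open normal subgroups of $G$ containing $\overline{G^m}$) and has exponent dividing $m$ by construction, so the task reduces to the key lemma: \emph{every small profinite group of finite exponent is finite.} Here I would invoke Zelmanov's positive solution to the Restricted Burnside Problem, which provides a universal upper bound $B(d,m)$ on the order of every $d$-generated finite group of exponent dividing $m$. Granted this, it is enough to prove $H$ is topologically $d$-generated for some $d$: every finite continuous quotient of $H$ would then have order at most $B(d,m)$, and the inverse limit $H$ itself would satisfy $|H| \leq B(d,m)$.

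The main obstacle is therefore the claim that a small profinite group of finite exponent is topologically finitely generated. My plan is to bound the minimum number of topological generators $d(H/N)$ uniformly over open normal $N \trianglelefteq H$; were this number unbounded, I would try to manufacture infinitely many open normal subgroups of a common index, contradicting smallness. A concrete route passes through the Frattini subgroup $\Phi(H) = \bigcap \{M : M \text{ is an open maximal subgroup of } H\}$, where the target is to prove $\Phi(H)$ is open: then $H/\Phi(H)$ is finite, so $H$ is topologically finitely generated. To force $\Phi(H)$ open, I would use that every finite simple continuous quotient of $H$ has exponent dividing $m$; the classification of finite simple groups implies that only finitely many finite simple groups of exponent dividing $m$ exist, with orders bounded in terms of $m$ alone, and smallness of $H$ then controls how many copies of each can appear inside $H/\Phi(H)$. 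This structural control on the composition factors of finite quotients of a small, finite-exponent profinite group is the heart of the argument; the remainder is routine bookkeeping in the inverse system of finite quotients of $H$.
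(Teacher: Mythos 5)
The paper does not prove this theorem: it is imported verbatim from Segal's 2018 paper (cited as \cite[Theorem 1]{segal2018remarks}), so there is no internal proof to compare against. Judged on its own merits, your ($\Leftarrow$) direction is correct, and your ($\Rightarrow$) direction has the right global shape: reduce to showing that $H := G/\overline{G^m}$ (small, exponent dividing $m$) is finite, and observe that once $H$ is topologically finitely generated this follows from Zelmanov's solution to the Restricted Burnside Problem. The two deep ingredients you identify --- Zelmanov and the CFSG-dependent finiteness of the set of finite simple groups of exponent dividing $m$ --- are indeed the ones in Segal's argument.

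The gap is in the step ``small $+$ finite exponent $\Rightarrow$ topologically finitely generated,'' which you propose to reach by proving $\Phi(H)$ open. Your sketch infers this from ``only finitely many simple groups of exponent dividing $m$'' plus ``smallness controls how many copies appear in $H/\Phi(H)$,'' but that chain of reasoning does not close. Since $\Phi(H)$ is the intersection of the cores of the maximal open subgroups, what you actually need is that $H$ has only finitely many \emph{primitive} quotients of exponent dividing $m$, and this is not a consequence of the finiteness of the simple ones: there are infinitely many pairwise non-isomorphic finite primitive groups of a fixed exponent. Concretely, for distinct primes $p,q$ and each $k\geq1$, let $E_k$ be an extraspecial $p$-group of exponent $p$ and order $p^{2k+1}$ acting faithfully and irreducibly on an $\mathbb{F}_q$-module $V_k$; then $P_k = V_k \rtimes E_k$ is primitive of exponent $pq$, and $|P_k|\to\infty$. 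So bounding the simple composition factors does not bound the primitive quotients, and the phrase ``smallness controls how many copies appear'' conceals the real work. To repair this one must use smallness in a sharper way: show that if $d(H/N)$ were unbounded over open normal $N$, then some ``crown'' section $A^k$ with $k$ large would sit inside a fixed open normal subgroup $L$ of $H$ and yield infinitely many $H$-invariant subgroups of $L$ of a single bounded index, contradicting smallness. That crown-theoretic (or equivalent) argument is the technical heart of the theorem, and as written your proposal does not supply it.
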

	As every open subgroup of a profinite group $G$ contains $\overline{G^m}$ for some $m\in \mathbb{N}$, we conclude that when $G$ is small, the open subgroups $\overline{G^m}$ form a local basis of neighbourhoods of the identity.
	
	The subgroups $\overline{G^m}$ are extremely useful in our context since they respect direct products. More precisely, let $G\cong A\times B$, then $\overline{G^m}\cong \overline{A^m}\times \overline{B^m}$ for every $m\in \mathbb{N}$. Moreover, assume that $G\cong \prod_IG_i$ and $\overline{G^m}$ is open in $G$, then $\overline{G^m}\cong (\prod_{I\setminus J}\overline{G_i^m})\times (\prod_JG_i)$ where $J$ is the cofinite set of indices $i\in I$ such that $G_i\leq \overline{G^m}$. Here we identify each $G_i$ with its natural embedding into $G$. We leave the verification to the reader. 
	
	Our first step toward proving Theorem \ref{Main Theorem} is proving the \textit{cancellation} property in the category of small profinite groups.
	
	\begin{defn} Let $G$ be an abstract group. We say that $G$ \textit{can be cancelled from direct products}, or simply \textit{is cancellable}, in a category $\mathcal{C}$ of groups, if $G\times A\cong G\times B$ implies $A\cong B$ for every $A,B\in \mathcal{C}$ (and isomorphisms in the category $\mathcal{C}$). 
	\end{defn}
	
	The Krull-Schmidt Theorem \ref{kst} implies that every finite group is cancellable in the category of finite groups. Clearly, not every group - not even an abelian group - is cancellable within all (abelian) groups, as can be seen for example by taking $G=\Z^{\omega}, A=\Z, B=\Z\times \Z$. Much less trivial example is the group $\Z$. In \cite{hirshon1969cancellation} the author constructed two groups $A,B$ such that $A\ncong B$ but $\Z\times A\cong \Z\times B$. We remark that, contrary to the case of profinite groups that will be discussed in this paper, the groups $A,B$ are finitely generated.
	
	Cancellation of groups has been studied intensively, as can be seen for example in \cite{hirshon1970cancellation, hirshon1975cancellation, hirshon1977some, oger1983cancellation} and \cite{walker1956cancellation}. One of the most famous results in this field is the cancellable property of finite groups:
	\begin{thm}\cite{hirshon1969cancellation}\label{finite groups are cancellable}
		Let $G$ be a finite group and $A,B$ be arbitrary groups. Then $G\times A\cong G\times B$ implies $A\cong B$. 
	\end{thm}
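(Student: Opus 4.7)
My plan is to deduce this cancellation property from the Krull--Schmidt theorem for finite groups (the finite case of Theorem~\ref{kst}) together with an endomorphism-based analysis. The initial reduction is as follows: by Theorem~\ref{kst} applied to the finite group $G$, decompose $G = G_1 \times \cdots \times G_n$ into indecomposable finite factors. Cancelling $G$ from both sides of $G\times A\cong G\times B$ can then be performed by cancelling the $G_i$ one at a time, so it suffices to treat the case in which $G$ is finite and indecomposable.

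Assume now that $G$ is finite and indecomposable and fix an isomorphism $\phi\colon G\times A\to G\times B$. I would form the endomorphism $\psi\colon G\to G$ given by $\psi(g) = \pi_G(\phi(g,1_A))$, where $\pi_G\colon G\times B\to G$ denotes the projection; this is indeed a homomorphism because $G$ and $A$ commute inside $G\times A$. Since $G$ is finite, the descending chain $\psi(G)\supseteq \psi^2(G)\supseteq \cdots$ stabilises, and a Fitting-type dichotomy combined with the indecomposability of $G$ should force $\psi$ either to be an automorphism of $G$ or to have trivial stable image.

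In the automorphism case, I would precompose $\phi$ with $(\psi^{-1},\operatorname{id}_A)$ to arrange that $\phi$ carries $G\times\{1_A\}$ onto $G\times\{1_B\}$ via the identity on the $G$-factor; the induced map on the quotients $A\cong (G\times A)/G$ and $B\cong (G\times B)/G$ then supplies the desired isomorphism. The subtle case is the nilpotent one: here $\phi(G\times\{1_A\})$ lies ``mostly'' in the $B$-factor, so one has to construct an explicit isomorphism $A\to B$ by combining the $A$-component of $\phi$ with compensating data taken from the $B$-component.

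The main obstacle, in my view, is precisely this nilpotent case. Because $A$ and $B$ are arbitrary, no chain condition is available on them, so one cannot invoke Krull--Schmidt on the ambient group $G\times A$. The standard way around the difficulty, following Hirshon's original approach, is an iterative procedure: at each stage one replaces $\phi$ by a modified isomorphism with a smaller ``nilpotent defect'', and the finiteness of $G$ guarantees termination in finitely many steps, at which point the argument reduces to the automorphism case already handled.
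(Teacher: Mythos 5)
The paper does not prove this theorem: it cites Hirshon (1969) and uses the result as an external input in Proposition~\ref{f.g groups are cancellable}, so there is no in-paper argument to compare against; your sketch must be judged on its own.

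Your reduction to finite indecomposable $G$ via the finite Krull--Schmidt theorem is sound, and the automorphism case can be made to work essentially as you indicate (with a small correction: precomposing with $(\psi^{-1},\operatorname{id}_A)$ normalizes only the $G$-component of $\phi|_{G\times 1_A}$, so $\phi'(G\times 1_A)$ is a diagonal-type subgroup $\{(g,\beta(g))\}$ rather than $G\times 1_B$; but the coset map $b\mapsto(1,b)\cdot\phi'(G\times 1_A)$ still gives $B\cong (G\times B)/\phi'(G\times 1_A)\cong A$). You should also be careful with the Fitting dichotomy itself: for it to force ``automorphism or nilpotent'' on an indecomposable group you need $\psi$ to be a \emph{normal} endomorphism (cf.\ Lemma~\ref{Standard exercise}), and $\psi(g)=\pi_G(\phi(g,1_A))$ is not visibly normal as written. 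For a non-normal endomorphism the stabilized chains only give a semidirect decomposition $G=\ker\psi^n\rtimes\operatorname{Im}\psi^n$, which indecomposability does not collapse (e.g.\ $S_3$ with the sign-then-include endomorphism). The fix is to realize both decompositions internally in one ambient group $X=G\times A=H\times B$ and take $\psi=\pi_G\,\iota_H\,\pi_H\,\iota_G$, which one checks is normal.

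The genuine gap is the nilpotent case, which you yourself call ``the main obstacle'' and then defer to ``Hirshon's iterative procedure'' without giving it. This is not a detail to be filled in later; it is the entire content of the theorem. Because $A$ and $B$ are completely arbitrary, no chain condition is available on $G\times A$, so Krull--Schmidt cannot be applied to the ambient group, and the exchange/induction that Hirshon actually performs is the nontrivial step. A writeup that resolves the automorphism subcase and names, but does not reproduce, the argument for the other subcase does not establish the statement.
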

	We use Theorem \ref{finite groups are cancellable} in order to prove a cancellation theorem for small profinite groups:
	
	\begin{prop}
		\label{f.g groups are cancellable}
		Every small profinite group is cancellable in the class of small profinite groups.
	\end{prop}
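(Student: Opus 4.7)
The plan is to reduce the problem to the finite cancellation of Theorem \ref{finite groups are cancellable} by passing to the finite quotients $H/\overline{H^m}$ for $m \in \mathbb{N}$, and then to assemble the resulting level-by-level isomorphisms into a genuine isomorphism $A \cong B$ via a compactness argument on an inverse system of finite sets.

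First, note that $G \times A$ is small (since $\overline{(G\times A)^m} = \overline{G^m}\times \overline{A^m}$ is open whenever $G$ and $A$ are small), hence $B$, being a continuous quotient of $G \times B\cong G\times A$, is small as well. Thus by Theorem \ref{segal characterization of small groups}, $\overline{H^m}$ is open in $H$ for $H \in \{G, A, B\}$ and every $m$. Using the direct product compatibility noted in the excerpt,
\begin{align*}
(G/\overline{G^m})\times (A/\overline{A^m}) &\cong (G\times A)/\overline{(G\times A)^m}\\
&\cong (G\times B)/\overline{(G\times B)^m}\cong (G/\overline{G^m})\times (B/\overline{B^m}),
\end{align*}
and all groups involved are finite. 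Applying Theorem \ref{finite groups are cancellable} to the finite group $G/\overline{G^m}$ then yields $A/\overline{A^m} \cong B/\overline{B^m}$ for every $m$.

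To assemble these isomorphisms, I would observe that for $n \mid m$ one has $\overline{A^m} \subseteq \overline{A^n}$, and in fact $\overline{A^n}/\overline{A^m}$ coincides with the characteristic subgroup of $A/\overline{A^m}$ generated by $n$-th powers (because $\overline{A^m}$ is open, so $A^n \overline{A^m}$ is already closed and equals $\overline{A^n}$). Consequently, every isomorphism $A/\overline{A^m} \to B/\overline{B^m}$ descends to an isomorphism $A/\overline{A^n}\to B/\overline{B^n}$. Setting
$$I_m = \mathrm{Iso}\bigl(A/\overline{A^m},\; B/\overline{B^m}\bigr),$$
we obtain an inverse system of non-empty finite sets indexed by $(\mathbb{N}, \mid)$. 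Its inverse limit is therefore non-empty, providing a compatible family $(\phi_m)_m$. Since $\{\overline{A^m}\}$ and $\{\overline{B^m}\}$ form bases of neighbourhoods of the identity by Theorem \ref{segal characterization of small groups}, we have $A \cong \varprojlim A/\overline{A^m}$ and $B \cong \varprojlim B/\overline{B^m}$, and the family $(\phi_m)_m$ assembles into the desired continuous isomorphism $A \cong B$.

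The main obstacle I anticipate is in the assembly step: it is essential that arbitrary isomorphisms between the finite quotients automatically respect the filtration by the subgroups $\overline{H^n}$, which is why the identification of $\overline{A^n}/\overline{A^m}$ with the $n$-th power subgroup (making the filtration characteristic, hence preserved by every isomorphism) is the key structural point. Once this is in place, the finite-quotient cancellation and the \emph{K\"onig-style} non-emptiness of an inverse limit of non-empty finite sets are standard inputs.
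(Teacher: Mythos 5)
Your proposal is correct and follows the paper's reduction exactly up to the point where Hirshon's theorem yields $A/\overline{A^m}\cong B/\overline{B^m}$ for every $m$; the difference lies entirely in how the level-by-level isomorphisms are assembled into a global one. The paper finishes by invoking the theorem of Fried--Jarden (\cite[Proposition 16.10.7]{fried2008field}) that a small profinite group is determined by its set of finite images; since every finite image of $A$ (resp.\ $B$) is a quotient of some $A/\overline{A^m}$ (resp.\ $B/\overline{B^m}$), the level-by-level isomorphisms force $\operatorname{fin}(A)=\operatorname{fin}(B)$, hence $A\cong B$. You instead note that $\overline{A^n}/\overline{A^m}$ is the verbal ($n$-th power) subgroup of $A/\overline{A^m}$, hence characteristic, so every isomorphism $A/\overline{A^m}\to B/\overline{B^m}$ descends along the natural maps, making $\bigl(\operatorname{Iso}(A/\overline{A^m},B/\overline{B^m})\bigr)_m$ an inverse system of non-empty finite sets indexed by the directed poset $(\mathbb N,\mid)$; a point of its non-empty inverse limit is a compatible family of isomorphisms, which (because the $\overline{A^m}$, resp.\ $\overline{B^m}$, form neighbourhood bases of the identity by Segal's criterion) passes to the limit to give $A\cong B$. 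Your route is self-contained and effectively inlines the compactness argument underlying the Fried--Jarden result in this special case; the paper's route is shorter and foregrounds the ``determined by finite images'' property, which it revisits and characterises in its final section. Both are valid, and the key structural observation you make explicit (characteristicity of the $n$-th power filtration) is exactly what makes the assembly work.
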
 
	\begin{proof}
		Let $G,A,B$ be small profinite groups such that $G\times A\cong G\times B$. We wish to prove that $A\cong B$. As discussed above, taking the closed subgroup generated by all $n$-th powers in each side of the equation, one concludes that $(G\times A)/\overline{(G\times A)^n}\cong (G/\overline{G^n})\times (A/\overline{A^n})$ and $(G\times B)/\overline{(G\times B)^n}\cong (G/\overline{G^n})\times (B/\overline{B^n})$. We get that for every $n\in \mathbb{N}$, $(G/\overline{G^n})\times (A/\overline{A^n})\cong (G/\overline{G^n})\times (B/\overline{B^n})$. Since $G$ is a small profinite group by assumption, Theorem \ref{segal characterization of small groups} implies that  $G/\overline{G^n}$ is finite. By Theorem \ref{finite groups are cancellable} we conclude that for every $n$ $A/\overline{A^n}\cong B/\overline{B^n}$. Recall that for the same reason, the groups $A/\overline{A^n}, B/\overline{B^n}$ are finite too. Since $A,B$ are small, by \cite[Proposition 16.10.7]{fried2008field}) they are determined up to isomorphism by the set of their finite topological images. Clearly, every finite topological image of $A$ ($B$) is a quotient of $A/\overline{A^n}$ ($B/\overline{B^n}$) for some natural number $n$. Hence, $A\cong B$ and we are done.  
	\end{proof}
	
	The structure of the rest of the paper is as follows.
	In the next section we prove Theorem \ref{Main Theorem}. In section \ref{free products} we return to cancellation and prove that nontrivial free pro-$\mathcal C$ products are cancellable in suitable extension closed varieties $\mathcal C$ of finite groups. In the final section \ref{small groups} we prove that small profinite groups are precisely those profinite groups which are determined by the set of their topological finite images.
	
	\section{Proof of Theorem \ref{Main Theorem}}
	
	Let $G$ be a profinite group. As the following example shows $G$ cannot always be decomposed into a direct product of indecomposable groups. 
	\begin{exam}
		Recall the Pontryagin duality between abelian profinite groups and discrete abelian torsion groups (see \cite[Section 2.9]{ribes2000profinite}). Let $G$ be a profinite abelian group. If $G\cong \prod_I G_i$ is a direct product of nontrivial indecomposable groups, then $G^*\cong \oplus_IG_i^*$ is a direct sum of indecomposable torsion abelian groups (see \cite[Lemma 2.9.4]{ribes2000profinite}). Hence it is enough to find a torsion abelian group which cannot be decomposed as a direct sum of indecomposable groups and then take its Pontryagin dual. By \cite[Theorem 10]{kaplansky2018infinite} the only indecomposable torsion abelian groups are the cyclic groups of prime-power order, and the quasicyclic groups $C_{p^{\infty}}$. Now let $G$ be the abelian group that is generated by infinitely countable many variables $\{x_n\}_{n\in \mathbb{N}}$ subject to the relations $px_0=0; p^nx_n=x_0$. Clearly $G$ doesn't contain any copy of $C_{p^{\infty}}$. However, $G$ cannot be isomorphic to a direct product of finite cyclic groups as it has an element of infinite height. Hence, $G$ cannot be expressed as a direct sum of indecomposable groups. 
	\end{exam}
	When we consider small profinite groups, the situation is different. We restate the first part of Theorem \ref{Main Theorem} as follows.
	\begin{thm} \label{decomposition}
		Let $G$ be a small profinite group. Then there exists an indexing set $I$ and a set of indecomposable profinite groups $\{A_i\}_{i\in I}$  such that $G\cong \prod A_i$.
	\end{thm}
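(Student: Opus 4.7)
The plan is to reduce the decomposition of $G$ to decompositions of its finite quotients $Q_m := G/\overline{G^m}$, combining Segal's characterisation (Theorem \ref{segal characterization of small groups}) with Krull--Schmidt for finite groups. The compatibility $\overline{G^m} \cong \overline{A^m} \times \overline{B^m}$ for $G \cong A \times B$, noted in the introduction, implies that every decomposition $G \cong \prod_{i \in I} A_i$ into nontrivial closed normal subgroups induces a decomposition $Q_m \cong \prod_{i \in I_m}(A_i/\overline{A_i^m})$ with each factor nontrivial, where $I_m := \{i \in I : A_i \not\le \overline{G^m}\}$. Refining each of these factors into indecomposables inside the finite group $Q_m$ and invoking Krull--Schmidt yields the key inequality $|I_m| \le k_m$, where $k_m$ is the number of indecomposable direct factors in a Krull--Schmidt decomposition of $Q_m$. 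Since $\bigcap_m \overline{G^m} = \{1\}$, every nontrivial $A_i$ lies in some $I_m$, so $I = \bigcup_m I_m$ is automatically countable.

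With this $k_m$-bound in hand, I would apply Zorn's lemma to the poset $\mathcal{P}$ whose elements are direct product decompositions of $G$ into nontrivial closed normal direct factors, ordered by refinement. A maximal element of $\mathcal{P}$ must consist of indecomposable factors, since any decomposable factor could be split further, contradicting maximality. The heart of the argument is constructing an upper bound for a chain $(\mathcal{D}_\alpha)_\alpha$ in $\mathcal{P}$: for each $m$, the chain induces a refining chain of decompositions of the finite group $Q_m$, and the $k_m$-bound forces this refining chain to stabilise (its number of nontrivial factors is monotone nondecreasing and bounded by $k_m$) at some decomposition $\widetilde{\mathcal{D}}_m$ of $Q_m$. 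The family $(\widetilde{\mathcal{D}}_m)_m$ is automatically compatible under the quotient maps $Q_{m'} \twoheadrightarrow Q_m$ arising from the chain, and via $G \cong \varprojlim_m Q_m$ this compatible system should lift to a decomposition $\mathcal{D}_\infty$ of $G$ refining every $\mathcal{D}_\alpha$.

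The main obstacle is verifying rigorously that this inverse limit of compatible finite-quotient decompositions yields a genuine topological direct product decomposition of $G$ (that the limit factors commute pairwise, are internally independent, and topologically generate $G$). The key ingredients for this verification are (i) the stabilisation of $Q_m$-decompositions supplied by the $k_m$-bound; (ii) the identity $(\bigcap_n A_n)\overline{G^m} = \bigcap_n(A_n\overline{G^m})$ for descending chains of closed subgroups in the compact group $G$; and (iii) the cancellation property of Proposition \ref{f.g groups are cancellable}, which lets one match common indecomposable factors coherently across stages of the chain. Extracting a maximal element of $\mathcal{P}$ then produces the desired decomposition $G \cong \prod_{i \in I} A_i$ into (countably many) indecomposable profinite groups.
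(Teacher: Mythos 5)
Your high-level strategy matches the paper's: Zorn's lemma on the poset of direct-product decompositions of $G$ ordered by refinement, with the finiteness supplied by smallness via the open subgroups $\overline{G^m}$. The paper constructs the upper bound of a chain concretely by setting $I := \varprojlim_k I_k$ and $A_i := \bigcap_k A_{\varphi_k(i)}$, and the finiteness bound it uses is simply $|\{i : A_i \not\le \overline{G^m}\}| \le |G/\overline{G^m}|$ (each nontrivial factor contributes a factor $\ge 2$ to the order); your appeal to finite Krull--Schmidt to produce the sharper bound $k_m$ is sound but unnecessary, and your ingredient (iii), cancellation, plays no role in this half of the theorem (the paper uses cancellation only for uniqueness).

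The real issue is that the part you defer --- ``verifying rigorously that this inverse limit of compatible finite-quotient decompositions yields a genuine topological direct product decomposition of $G$'' --- is not a routine verification but essentially \emph{is} the proof, and some of its steps don't follow purely from your ingredients (i)--(iii). Concretely, the paper must show (a) convergence: every $\overline{G^n}$ contains all but finitely many $A_i$, established by identifying $W = \{i : A_i \not\le \overline{G^n}\}$ with $\varprojlim_k W_k$ and using the uniform bound on $|W_k|$; (b) surjectivity of $\prod_I A_i \to G$, which requires knowing $W \ne \emptyset$ whenever $G/\overline{G^n}$ is nontrivial --- this uses that an inverse limit of nonempty finite sets is nonempty, not just the stabilisation bound; (c) injectivity, via picking $k$ far enough along the chain that the finitely many indices of $W$ separate under $\varphi_{I,I_k}$, so that the relevant $A_{\varphi_{I,I_k}(j)}$ generate a direct product mod $\overline{G^n}$; and (d) a step you omit entirely: after discarding the (possibly many) trivial $A_i$, one must still check that the remaining index set surjects onto each $I_k$, i.e.\ that the trimmed decomposition genuinely refines every member of the chain. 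Without (d) Zorn's lemma does not apply. So while the framework is correct and parallel to the paper's, the proposal as written leaves the central technical content of the theorem unproved.
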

	\begin{proof}
		We look at the ordered set $(\mathcal{L},\preceq)$ of all decompositions $G\cong \prod_I A_i$ into nontrivial profinite groups. More precisely, an element in $\mathcal{L}$ is a tuple $(I,\{A_i\}_{i\in I},\psi)$ consisting of an indexing set, a set of nontrivial profinite groups, and an isomorphism $\psi:\prod_I A_i\to G$.  We say that $(I,\{A_i\}_{i\in I},\psi)\preceq(J,\{B_j\}_{j\in J},\varphi)$  if there exists a surjection $\varphi^{}_{\!JI}:J\to I$ such that for every $j\in J$ there is an embedding $\varphi_j:B_j\to  A_{\varphi^{}_{\!JI}(j)}$, which satisfies $\psi\circ\varphi_j|_{B_j}=\varphi|_{B_j}$. For more convenience we can replace every group by its image in $G$ and assume that all the groups are subgroups of $G$ and all the embeddings are inclusions. In particular we can omit all the homomorphisms in the definition.
		
		We want to show that there is a maximal element in $(\mathcal{L},\preceq)$, as such an element must consist of a set of indecomposable profinite groups. For that it is enough to show that every chain has an upper bound. First we shall notice that for every $(I,\{A_i\}_{i\in I})\preceq(J,\{B_j\}_{j\in J})$ and for every $i\in I$, $A_i$ is generated by the set of subgroups $\{B_j:\varphi_{JI}(j)=i\}$. Indeed, otherwise, there is some proper open subgroup, $U\leq A_j$ such that $A_j\leq U$ for all $\varphi_{JI}(j)=i$. But then the proper open subgroup $U\times\prod_{I\setminus \{i\}}A_i$ contains all the subgroups $B_j: j\in J$, in contrary to the fact that $\{B_j:j\in J\}$ generates $G$.
		
		Now let $(I_k,\{A_{i_k}\}_{i_k\in I_k})_{k\in S}$ be a chain in $(\mathcal{L},\preceq)$. We take $\{I,\varphi_{k}\}_{k\in S}$ to be the inverse limit of $\{I_k \ :\ k\in S\}$ together with maps $\phi_k: I \rightarrow I_k$. For each $i\in I$ we define $A_i:= \cap_{ k \in S} A_{\varphi_{k}(i)}$. We want to show that every open subgroup of $G$ contains all but finitely many of the $A_i$'s. Since $G$ is small it is enough to consider the open subgroups of the form $\overline{G^n}$. Denote $|\overline{G/G^n}|=m_n$. Let $(I_k,\{A_{i_k}\}_{i_k\in I_k})$ be an element in the chain. Then $G/\overline{G^n}\cong \prod_{I_k}A_{i_k}/\overline{A_{i_k}^n}$. Hence there are at most $m_n$ groups of the form $A_{i_k}$ which are not contained in $\overline{G^n}$. Letting $W_k:=\{  i_k \in I_k \  | \  A_{i_k} \not \subseteq \overline{G^n} \}$  and
		$W=\{  i \in I \  | \  A_{i} \not \subseteq \overline{G^n} \}$ we have $|W_k| \leq m_n$. Note that $\phi_k(W) \subseteq  W_k$ for each $k \in S$ and $i \in W$ is equivalent to $\cap_{k \in S} A_{\phi_k(i)} \not \subseteq  \overline{G^n}$ which is equivalent to $A_{\phi_k(i)} \not \subseteq  \overline{G^n}$, i.e. $\phi_k(i) \in W_k$ for every $k \in S$. Therefore $W$ equals the inverse limit of the sets $W_k$. As the size of $W_k$ is uniformly bounded by $m_n$ for each $k \in S$ we deduce that $|W| \leq m_n$ as required.
		
		Now we prove that the natural embeddings $A_i\to G$ induce an isomorphism $\prod_IA_i\to G$.
		
		Define a map $f: \prod_IA_i\to G$ by $f((a_i)_{i\in I})= \prod_Ia_i$. Observe that since the subgroups $A_i:i\in I$ commute and converge to 1, (i.e, every open subgroup of $G$ contains all but finitely many of them), the maps $f$ is well defined. Moreover, since the subgroups $A_i:i\in I$ commute, $f$ is indeed a homomorphism. Moreover, this map is clearly continuous.
		
		For showing that $f$ is an epimorphism, it is enough to show that the subgroups $A_i:i\in I$ generate $G$. This is equivalent to showing that there is no open subgroup of $G$ that contains all of them. Let $n\in \mathbb{N}$ be such that $G/\overline{G^n}$ is nontrivial. Let $(I_k,\{A_{i_k}\}_{i_k\in I_k})$ be an element in the chain. Since $\{A_{i_k}\}_{i_k\in I_k}$ generates $G$, there exists some $i_k\in I_k$ such that $A_{i_k}\nleq \overline{G^n}$. In other words the set $W_k$ defined earlier is nonempty. We showed that $W=\{  i \in I \  | \  A_{i} \not \subseteq \overline{G^n} \}$ is the inverse limit of the finite sets $W_k$ and hence $W \not = \emptyset$ as required.
		
		The only thing left to show is that $f$ is injective. Let $(a_i)_{i\in I}$ be such that $\prod_{i\in I} a_i=e$. We shall show that $a_i=e$ for all $i\in I$. It is enough to show that $a_i\in \overline{G^n}$ for every $n\in \mathbb{N}$ and $i\in I$. Let $n\in \mathbb{N}$. Let $J=\{ j_1,...,j_m\}$ be the finite set of indices $i\in I$ such that $A_i\nleq \overline{G^n}$. Then for all $i\in I\setminus J$, $a_i\in \overline{G^n}$. Let $(I_k,\{A_{i_k}\}_{i_k\in I_k})$ be an element in the chain such that  $\varphi_{I,I_k}(j_r)\ne \varphi_{I,I_k}(j_t)$ for all $r\ne t\in \{1,...,m\}$. Then $a_{j_t}\in A_{\varphi_{I,I_k}(j_1)}$ for all $1\leq t\leq m$.  The groups $A_{\varphi_{I,I_k}(j_1)},...,A_{\varphi_{I,I_k}(j_m)}$ generate their a direct product modulo $\overline{G^n}$. Hence $a_{j_1}\cdots a_{j_m}\leq \overline{G^n}$ implies that $a_{j_1},\ldots ,a_{j_m}\in \overline{G^n}$ as required.
		
		We proved that $G\cong_I \prod A_i$. However, some of the groups $A_i$ might be trivial. Replacing $I$ by $I'=\{i\in I:A_i\ne\{e\}\}$, clearly, $G\cong\prod_{i\in I'}  A_i$. The only thing left to show is that $I'$ projects over $I_k$ for every $k\in S$. In other words, we need to show that for every $i_k\in I_k$ there is some $i\in I, \varphi_{I,I_k}(i)=i_k$ such that $A_i\ne \{e\}$. Since $A_{i_k}\ne \{e\}$, it is enough to show that $A_{i_k}$ is generated by $\{A_i:\varphi_{I,I_k}(i)=i_k\}$. Since $A_{i_k}$ is generated by $\{A_{i_{k'}}:\varphi_{I_{K'},I_k}(i_{K'})=i_k\}$, for all $k\leq k'\in S$, and by assumption the subgroup they generate is their direct product, $A_{i_k}$ is naturally isomorphic to $\prod_{\varphi_{I_{k'},I_k}(i_{k'})=i_k}A_{i_{k'}}$ and the claim follows by the exact same proof for $G$. The proof of Theorem \ref{decomposition} is complete.
	\end{proof}
	In order to prove the uniqueness part of Theorem \ref{Main Theorem} we first need some background on \textit{normal endomorphisms}.
	\begin{defn}
		Let $G$ be a profinite group and $f:G\to G$ an endomorphism. We say that $f$ is \textit{normal} if for every $a,b\in G$, $af(b)a^{-1}=f(aba^{-1})$. 
	\end{defn}
	\begin{rem}
		Let $G$ be a profinite group and $f:G\to G$ a normal endomorphism. Assume that $H\leq G$ is a closed normal subgroup $f$-invariant subgroup of $G$ such that $f$ induces a homomorphism $\bar{f}:G/H\to G/H$. Then $\bar{f}$ is also normal.
	\end{rem}
	\begin{defn}
		Let $\psi,\varphi$ be two endomorphisms of $G$. Define \emph{the sum} \[ \varphi \odot \psi: \  G \rightarrow G \] of $\varphi$ and $\psi$ to be the function defined by $(\varphi \odot\psi)(a)=\varphi(a)\psi(a)$ for all $a \in G$.
	\end{defn}
	As usual we reserve the notation $\varphi \psi$ for the composition $\varphi \circ \psi$.
	
	\begin{lem}\label{properties of normal}
		Let $G$ be a profinite group and $\psi,\varphi$ be normal endomorphisms on $G$. The reader may easily verify the following properties: 
		\begin{enumerate}
			\item If $\varphi \odot \psi$ is an endomorphism of $G$ then it is a normal endomorphism as well.
			\item The composition $\varphi\circ\psi$ is a normal endomorphism of $G$.
			\item If $\psi$ is an automorphism then $\psi^{-1}$ is a normal auomorphism as well.
		\end{enumerate}
	\end{lem}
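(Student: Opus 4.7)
The plan is to carry out the three verifications directly from the definition $a f(b) a^{-1} = f(aba^{-1})$. All three parts are essentially one-line manipulations, and no subtle topological or profinite machinery is needed; the lemma is stated to fix bookkeeping for later use.

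For part (1), I would expand $(\varphi \odot \psi)(aba^{-1})$ using the definition of $\odot$ and normality of each factor:
\[
(\varphi \odot \psi)(aba^{-1}) = \varphi(aba^{-1})\,\psi(aba^{-1}) = \bigl(a\varphi(b)a^{-1}\bigr)\bigl(a\psi(b)a^{-1}\bigr) = a\,\varphi(b)\psi(b)\,a^{-1} = a\,(\varphi \odot \psi)(b)\,a^{-1},
\]
which is exactly what normality requires.

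For part (2), I would compute $(\varphi\circ\psi)(aba^{-1}) = \varphi(\psi(aba^{-1})) = \varphi(a\,\psi(b)\,a^{-1})$ using normality of $\psi$, and then apply normality of $\varphi$ to the element $c = \psi(b)$ to get $\varphi(aca^{-1}) = a\,\varphi(c)\,a^{-1} = a\,(\varphi\circ\psi)(b)\,a^{-1}$.

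For part (3), given that $\psi$ is a normal automorphism, I want $a\,\psi^{-1}(b)\,a^{-1} = \psi^{-1}(aba^{-1})$. Setting $c = \psi^{-1}(b)$ so that $b = \psi(c)$, the identity becomes $\psi(aca^{-1}) = a\psi(c)a^{-1}$, which is just the normality of $\psi$ itself. The only tiny point to note is that $\psi^{-1}$ is automatically continuous because an automorphism of a profinite group is a topological automorphism, so there is nothing extra to check. Overall I do not expect any real obstacle here; the main thing is to write out the three manipulations cleanly.
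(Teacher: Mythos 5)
Your verification is correct and is exactly the routine direct check the paper leaves to the reader; the paper supplies no proof, so there is no alternative approach to compare against. The remark in part (3) that $\psi^{-1}$ is automatically continuous (a continuous bijection of a compact Hausdorff space is a homeomorphism) is the only non-algebraic point and you handle it correctly.
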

	A fundamental example of a normal automorphism is the following: 
	\begin{lem}\label{every sum is normal}
		Let $G=H_1\times \cdots H_m$ and denote by $\pi_j:G\to H_j$ and $i_j:H_j\to G$ the natural projections and inclusion maps. Then the sum of any $k$ distinct $i_j\pi_j$ is a normal endomorphism. Observe that the sum of all of them is the identity.
	\end{lem}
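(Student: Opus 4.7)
The plan is to work coordinate-wise using the identification $G = H_1 \times \cdots \times H_m$. For $a = (a_1, \ldots, a_m) \in G$, the map $i_j \pi_j$ sends $a$ to the tuple $(e, \ldots, e, a_j, e, \ldots, e)$ with $a_j$ in the $j$th slot. The images of the distinct maps $i_j \pi_j$ lie in the pairwise commuting subgroups $H_1, \ldots, H_m$, so for any subset $S \subseteq \{1, \ldots, m\}$ the pointwise product
\[
\varphi_S(a) \;=\; \bigodot_{j \in S} i_j \pi_j (a) \;=\; \prod_{j \in S} i_j \pi_j(a)
\]
does not depend on the order of multiplication, and it equals the tuple whose $j$th coordinate is $a_j$ for $j \in S$ and $e$ otherwise.

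First I would verify that $\varphi_S$ is an endomorphism. Since the coordinates of $\varphi_S(a)$ are either $a_j$ or $e$, and group multiplication in $G$ is performed coordinatewise, one checks directly that $\varphi_S(ab) = \varphi_S(a) \varphi_S(b)$; continuity is clear because each $i_j \pi_j$ is continuous and only finitely many factors are involved.

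Next I would verify normality by a direct coordinate-wise computation. For $a,b \in G$, conjugation $b \mapsto aba^{-1}$ acts componentwise as $b_j \mapsto a_j b_j a_j^{-1}$, so
\[
\varphi_S(aba^{-1}) = \bigl( c_1, \ldots, c_m \bigr), \qquad c_j = \begin{cases} a_j b_j a_j^{-1} & j \in S, \\ e & j \notin S. \end{cases}
\]
On the other hand, $a\varphi_S(b)a^{-1}$ has $j$th coordinate $a_j b_j a_j^{-1}$ for $j \in S$ and $a_j e a_j^{-1} = e$ for $j \notin S$, yielding the same tuple. Hence $a\varphi_S(b)a^{-1} = \varphi_S(aba^{-1})$, proving that $\varphi_S$ is normal.

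Finally, taking $S = \{1, \ldots, m\}$ gives $\varphi_S(a) = (a_1, \ldots, a_m) = a$, which is the identity endomorphism as asserted. There is no real obstacle here; the content of the lemma is simply that commuting subgroups make the pointwise sum a well-defined homomorphism, and that coordinate-wise conjugation commutes with coordinate-wise truncation.
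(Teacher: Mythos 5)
Your coordinate-wise verification is correct and complete: it checks that $\varphi_S$ is a (continuous) homomorphism, that it commutes with conjugation, and that $S=\{1,\dots,m\}$ gives the identity. The paper states this lemma without proof (it is treated as an easy verification left to the reader), and your argument is exactly the direct computation one would expect.
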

	\begin{defn}
		Let $f:G\to G$ be an endomorphism. We say that $f$ is \textit{nilpotent} if for some $n\in \mathbb{N}$, $f^n(G)=\{e\}$.
	\end{defn}
	Now let $G$ be an indecomposable small profinite group, and let $f:G\to G$ be a normal endomorphism. Then for each $n \in \mathbb N$, $f$ naturally induces an endomorphism $f_n:G/\overline{G^n}\to G/\overline{G^n}$ of the finite group $G/\overline{G^n}$.
	\begin{lem}\label{automorphism or nilpotent}
		Let $G$ be an indecomposable profinite group and $f:G\to G$ a normal endomorphism. Then either $f$ is an automorphism or for every $n$ $f_n$ is nilpotent.
	\end{lem}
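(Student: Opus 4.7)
The plan is to establish a Fitting-type decomposition inside each finite quotient $\bar G_n := G/\overline{G^n}$, transfer it compatibly across the inverse system, and then read the dichotomy off from the indecomposability of $G$.

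First I would fix $n$ and work inside the finite group $\bar G_n$ with its induced normal endomorphism $f_n$ (that $f_n$ is normal is the remark preceding Definition~\ref{properties of normal}). Choose $N = N(n)$ large enough for both the descending chain $f_n^k(\bar G_n)$ and the ascending chain $\ker f_n^k$ to have stabilised, and set $K_n := f_n^N(\bar G_n)$ and $H_n := \ker f_n^N$. Normality of $f_n$ (and hence of $f_n^N$, by the identity $a f_n^N(b) a^{-1} = f_n^N(aba^{-1})$) forces $K_n$ and $H_n$ to be normal in $\bar G_n$; stabilisation makes $f_n\vert_{K_n}$ an automorphism and $f_n^N$ identically trivial on $H_n$. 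A routine check then yields $K_n \cap H_n = 1$ (if $x = f_n^N(y)$ and $f_n^N(x) = e$ then $y \in \ker f_n^{2N} = \ker f_n^N$, whence $x = e$) and $\bar G_n = K_n H_n$ (given $x$, pick $z$ with $f_n^N(x) = f_n^{2N}(z)$, so that $x\cdot f_n^N(z)^{-1} \in H_n$). Two normal subgroups with trivial intersection commute element-wise, yielding the internal direct product $\bar G_n = K_n \times H_n$.

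Next, for $n \mid m$ the natural projection $\pi_{m,n} : \bar G_m \to \bar G_n$ intertwines $f_m$ and $f_n$, so $\pi_{m,n}(K_m) \subseteq K_n$ and $\pi_{m,n}(H_m) \subseteq H_n$; surjectivity of $\pi_{m,n}$ together with the uniqueness of the decomposition $\bar G_n = K_n \times H_n$ forces equality in both inclusions. Because $\{\overline{G^n}\}_n$ is a neighbourhood basis of the identity (Theorem~\ref{segal characterization of small groups}), passing to the inverse limit produces closed normal subgroups $K := \varprojlim K_n$ and $H := \varprojlim H_n$ of $G$ with $G \cong K \times H$ in the category of profinite groups. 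Indecomposability of $G$ then forces $K = 1$ or $H = 1$. If $K = 1$ then $K_n = 1$ and hence $\bar G_n = H_n$ for every $n$, which by definition of $H_n$ says $f_n^{N(n)}=1$, so $f_n$ is nilpotent for every $n$. If instead $H = 1$ then every $f_n$ is an automorphism of $\bar G_n$; the image $f(G)$ is compact hence closed, and projects onto every $\bar G_n$, so it is also dense and therefore equal to $G$, while any $x \in \ker f$ has trivial image in every $\bar G_n$ and must be trivial since $\bigcap_n \overline{G^n} = 1$. A continuous bijection between compact Hausdorff groups is a topological isomorphism, so $f$ is a profinite automorphism.

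The principal technical obstacle lies in the Fitting step: in the non-abelian finite setting the direct-product structure $\bar G_n = K_n \times H_n$ does not follow from the image/kernel analysis alone, and the normality hypothesis on $f_n$ must be used essentially both to make $K_n$ and $H_n$ normal subgroups of $\bar G_n$ and, via the commutation of normal subgroups with trivial intersection, to upgrade the factorisation $\bar G_n = K_n H_n$ to an internal direct product that can then be inverse-limited.
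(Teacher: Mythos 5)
Your argument is correct and is essentially the same as the paper's: a Fitting-type decomposition $\bar G_n = K_n \times H_n$ in each finite quotient (which the paper isolates as Lemma~\ref{Standard exercise}), compatibility under the projections $\pi_{m,n}$, an inverse limit yielding $G = K \times H$, and indecomposability forcing one factor to vanish. The only cosmetic differences are that you prove the finite Fitting step inline, you justify the compatibility of the factors via canonicity and an order count rather than the paper's explicit tracking of the stabilization indices $m_n$, and at the end you spell out the compactness/density argument that makes $f$ a topological automorphism, a point the paper leaves implicit.
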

	In order to prove the above lemma, we first need to recall a lemma on finite groups.
	\begin{lem}\label{Standard exercise}
		Let $G$ be a finite group and $f:G\to G$ a normal endomorphism. The following are equivalent:
		\begin{enumerate}
			\item $G= \ker f^n\oplus \operatorname{Im}f^n$.
			\item $\ker f^n=\ker f^k$ and $\operatorname{Im}f^n=\operatorname{Im}f^k$ for every $k\geq n$.
		\end{enumerate}
		\begin{proof}
			$(1)\Rightarrow(2):$ It is enough to prove that $\ker f^n=\ker f^{2n}$ and $\operatorname{Im}f^n=\operatorname{Im}f^{2n}$, since then we may replace $f^n$ by $f^{2^mn}$ for all natural number $m$ and conclude by induction that $\ker{f^n}=\ker f^{2^mn}$ and $\operatorname{Im}f^n=\operatorname{Im}f^{2^mn}$ for all natural number $m$. The rest of equalities are followed by the inclusions $\ker f^{2^mn}\subseteq \ker f^k\subseteq f^{2^{m+1}n}$ and $\operatorname{Im} f^{2^{m+1}n}\subseteq \operatorname{Im} f^k\subseteq \operatorname{Im} f^{2^{m}n}$ for a suitable $m$.
			
			Let $x\in \ker f^{2n}$. Express $x=yf^n(z)$ for some $y\in \ker f^n$. Then $f^n(x)=f^n(y)f^{2n}(z)=f^{2n}(z)$. Hence $f^n(x)=f^{2n}(z)\in \ker f^n\cap \operatorname{Im}f^n=\{e\}$. We conclude that  $x\in \ker f^n$. We get that $\ker f^n=\ker f^{2n}$. Since $\operatorname{Im}f^{2n}\subseteq \operatorname{Im} f^n$ and $|\operatorname{Im}f^{2n}|=\dfrac{|G|}{|\ker f^{2n}|}=\dfrac{|G|}{|\ker f^{n}|}=|\operatorname{Im}f^{n}|$ then $\operatorname{Im}f^{2n}=\operatorname{Im}f^{n}$ as required.

			$(2)\Rightarrow(1):$ We shall show that $\ker f^n\cap \operatorname{Im} f^n=\{e\}$. Indeed, let $x\in \ker f^n\cap \operatorname{Im} f^n$. Then $x=f^n(y)$ for some $y\in G$. Observe that $f^{2n}(y)=f^n(x)=e$. Hence $y\in \ker f^{2n}=\ker f^n$ which implies that $x=f^n(y)=e$. Since $\ker f^n$ and $\operatorname{Im} f^n$ are both normal subgroups we get that the subgroup they generate is their direct product. Since $|G|=|\ker f^n|\cdot|\operatorname{Im}{f^n}|=|\ker f^n\oplus \operatorname{Im}{f^n}|$ we conclude that $G=\ker f^n\oplus \operatorname{Im}{f^n}$ as required.
		\end{proof}
	\end{lem}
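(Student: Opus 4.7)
The plan is to exploit two standard structural observations about normal endomorphisms of a finite group $G$. First, since $f$ is normal, one inductively checks that each iterate $f^n$ is also a normal endomorphism, and hence both $\ker f^n$ and $\operatorname{Im} f^n$ are normal subgroups of $G$: normality of the kernel is automatic, and for the image one computes $g f^n(x) g^{-1} = f^n(g x g^{-1})$, so conjugation preserves $\operatorname{Im} f^n$. Second, the first isomorphism theorem applied to $f^n$ gives $|G| = |\ker f^n|\cdot |\operatorname{Im} f^n|$, so establishing $\ker f^n \cap \operatorname{Im} f^n = \{e\}$ together with both factors being normal will automatically force $G = \ker f^n \oplus \operatorname{Im} f^n$ (since two normal subgroups of a finite group with trivial intersection commute elementwise and generate their internal direct product, with order equal to the product of the individual orders).

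For the direction $(1) \Rightarrow (2)$, assume $G = \ker f^n \oplus \operatorname{Im} f^n$. The restriction $f^n|_{\operatorname{Im} f^n}$ maps $\operatorname{Im} f^n$ into itself, and its kernel equals $\ker f^n \cap \operatorname{Im} f^n = \{e\}$. Since $\operatorname{Im} f^n$ is finite, $f^n|_{\operatorname{Im} f^n}$ is a bijection, and hence so is every further iterate. This immediately yields $\operatorname{Im} f^k = \operatorname{Im} f^n$ for all $k \geq n$. For the kernel equality, pick $x \in \ker f^k$ and decompose $x = ab$ with $a \in \ker f^n$ and $b \in \operatorname{Im} f^n$; since $k \geq n$ we have $f^k(a) = e$, so $f^k(b) = e$, and the injectivity of $f^k$ on $\operatorname{Im} f^n$ (already established) forces $b = e$, i.e. $x \in \ker f^n$.

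For $(2) \Rightarrow (1)$, I would first prove $\ker f^n \cap \operatorname{Im} f^n = \{e\}$: if $x = f^n(y)$ lies in $\ker f^n$ then $f^{2n}(y) = e$, so by hypothesis $y \in \ker f^{2n} = \ker f^n$, giving $x = f^n(y) = e$. Combined with the fact that $\ker f^n$ and $\operatorname{Im} f^n$ are both normal in $G$, the order count $|\ker f^n|\cdot|\operatorname{Im} f^n| = |G|$ then delivers the internal direct product decomposition $G = \ker f^n \oplus \operatorname{Im} f^n$. The only mildly delicate point in the whole argument is justifying that $\operatorname{Im} f^n$ is normal in $G$ at all, which is precisely where the hypothesis that $f$ is a \emph{normal} endomorphism enters essentially; everything else is routine counting and the pigeonhole principle applied to endomorphisms of a finite set.
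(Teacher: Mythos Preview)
Your proof is correct and your $(2)\Rightarrow(1)$ direction is verbatim the paper's argument. For $(1)\Rightarrow(2)$ you take a marginally different but equally valid route: you observe that $f^n|_{\operatorname{Im}f^n}$ is injective (hence bijective by finiteness) and deduce the stabilization of both images and kernels directly from that, whereas the paper first proves $\ker f^n=\ker f^{2n}$ by a decomposition argument, gets $\operatorname{Im}f^n=\operatorname{Im}f^{2n}$ by order counting, and then sandwiches along the chain $n,2n,4n,\ldots$ to reach general $k$. Your phrasing is slightly more conceptual (a Fitting-lemma flavour), the paper's is a direct doubling trick; both are routine. One small point worth tightening: you invoke ``injectivity of $f^k$ on $\operatorname{Im}f^n$ (already established)'' for arbitrary $k\geq n$, but strictly speaking you only argued that $f^n$ and its iterates are bijections on $\operatorname{Im}f^n$. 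The gap is trivial to close --- either note that $\ker f\cap\operatorname{Im}f^n\subseteq\ker f^n\cap\operatorname{Im}f^n=\{e\}$, so already $f|_{\operatorname{Im}f^n}$ is injective, or sandwich $k$ between $n$ and a multiple of $n$ --- but it would be cleaner to say so explicitly.
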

	\begin{proof}[Proof of Lemma \ref{automorphism or nilpotent}]
		Since $G/\overline{G^n}$ are finite groups, for every $n$ there exists some minimal number $m_n\in \mathbb{N}$ such that $\ker(f_n^{m_n})=\ker(f_n^k)$ and $\operatorname{Im}(f_n^{m_n})=\operatorname{Im}(f_n^k)$ for every $k\geq m_n$. Since $f_n$ is normal, Lemma \ref{Standard exercise} shows that \[ G/\overline{G^n}= \ker(f_n^{m_n})\oplus \operatorname{Im}(f_n^{m_n}).\] 
		
		Observe that if $n_1|n_2$ then $m_{n_1}\leq m_{n_2}$. Indeed, we have 
		
		\begin{equation} \label{e1} G/\overline{G^{n_2}}= \ker(f_{n_2}^{m_{n_2}})\oplus \operatorname{Im}(f_{n_2}^{m_{n_2}}) \end{equation} and hence $\ker(f_{n_2})^{m_{n_2}}\overline{G^{n_1}}$ and $\operatorname{Im}(f_{n_2})^{m_{n_2}}\overline{G^{n_1}}$ generate $G/\overline{G^{n_1}}$. The maps $\{f_n\}_{n\in \mathbb{N}}$ are compatible with the natural projections $\pi_{n_1,n_2}: G/\overline{G^{n_2}} \to G/\overline{G^{n_1}}$ and therefore \begin{equation} \label{e2} \ker(f_{n_2}^{m_{n_2}})\overline{G^{n_1}}\leq \ker(f_{n_1}^{m_{n_2}}) \textrm{ and }  \operatorname{Im}(f_{n_2}^{m_{n_2}})\overline{G^{n_1}}\leq \operatorname{Im}(f_{n_1}^{m_{n_2}}).\end{equation} 
		
		Hence $\ker(f_{n_1}^{m_{n_2}})$ and $\operatorname{Im}(f_{n_1}^{m_{n_2}})$ generate $G/\overline{G^{n_1}}$. Since both groups are normal, and obviously $$|\ker(f_{n_1}^{m_{n_2}})|\cdot |\operatorname{Im}(f_{n_1}^{m_{n_2}})|=|G/\overline{G^{n_1}}|,$$ it follows that \begin{equation} \label{e3}G/\overline{G^{n_1}}= \ker(f_{n_1}^{m_{n_2}})\oplus \operatorname{Im}(f_{n_1}^{m_{n_2}}).\end{equation}
		
		Lemma \ref{Standard exercise} now implies that $\ker(f_{n_1}^{m_{n_2}})=\ker(f_{n_1}^k)$ and	 $\operatorname{Im}(f_{n_1}^{m_{n_2}})=\operatorname{Im}(f_{n_1}^k)$ 
		for every $k\geq m_{n_2}$. In particular $m_{n_1}\leq m_{n_2}$ and $\ker(f_{n_1}^{m_{n_1}})=\ker(f_{n_1}^{m_{n_2}})$,
		$\operatorname{Im}(f_{n_1}^{m_{n_1}})=\operatorname{Im}(f_{n_1}^{m_{n_2}})$. In addition (\ref{e1}), (\ref{e2}) and (\ref{e3}) imply $$\ker(f_{n_2}^{m_{n_2}})\overline{G^{n_1}}= \ker(f_{n_1}^{m_{n_2}}) =\ker(f_{n_1}^{m_{n_1}})$$  and $$\operatorname{Im}(f_{n_2}^{m_{n_2}})\overline{G^{n_1}}= \operatorname{Im}(f_{n_1}^{m_{n_2}})=\operatorname{Im}(f_{n_1}^{m_{n_1}}).$$
		We conclude that for every $n_1|n_2$ the natural projection $\pi_{n_1,n_2}$ sends $\ker(f_{n_2}^{m_{n_2}})$ onto $\ker(f_{n_1}^{m_{n_1}})$ and $\operatorname{Im}(f_{n_2}^{m_{n_2}})$ onto $\operatorname(f_{n_1}^{m_{n_1}})$.
		
		Let $H_1= {\underleftarrow{\lim}}_n \ker(f_n^{m_n})$ and $H_2={\underleftarrow{\lim}}_n \operatorname{Im}(f_n^{m_n})$, then $G= H_1\oplus H_2$. Recall that $G$ is indecomposable and hence $H_2$ is either $G$ or $\{e\}$. 
		
		Suppose that $H_2=G$. Then for every $n$ $\operatorname{Im}f_n^{m_n}=G/\overline{G^n}$. This implies that $f_n^{m_n}$ and hence $f_n$ is surjective for every $n$ and hence an automorphism. We conclude that $f$ is an automorphism.
		
		Suppose $H_2=\{e\}$. Then for every $n$ $\operatorname{Im}f_n^{m_n}=\{e\}$ and thus $f_n$ is nilpotent. 
	\end{proof}
	\begin{lem} \label{two}
		Let $G$ be a small indecomposable profinite group and $\varphi,\psi:G\to G$ normal endomorphisms such that $\varphi \odot \psi$ is an endomorphism. Assume that $\varphi \odot \psi$ is an automorphism. Then either $\varphi$ or $\psi$ is an automorphism. 
	\end{lem}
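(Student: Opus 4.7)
The plan is to reduce to the normalised case $\varphi\odot\psi=\mathrm{id}_G$, derive commutation relations between $\varphi$ and $\psi$, and then apply Lemma~\ref{automorphism or nilpotent} on each finite quotient $G/\overline{G^n}$ to show that unless one of $\varphi,\psi$ is already an automorphism, $G$ is forced to be trivial.

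First I would set $\alpha:=\varphi\odot\psi$ and replace $\varphi,\psi$ by $\alpha^{-1}\varphi,\alpha^{-1}\psi$. By Lemma~\ref{properties of normal} these remain normal endomorphisms, their $\odot$-sum is $\mathrm{id}_G$, and $\alpha^{-1}\varphi$ is an automorphism if and only if $\varphi$ is (because $\alpha$ is itself an automorphism). So I may assume $\varphi(a)\psi(a)=a$ for every $a\in G$, which rearranges to $\psi(a)=\varphi(a)^{-1}a$. Next, I would extract two commutation facts. Expanding $(\varphi\odot\psi)(ab)=(\varphi\odot\psi)(a)(\varphi\odot\psi)(b)$ and using that $\varphi,\psi$ are homomorphisms forces $\psi(a)\varphi(b)=\varphi(b)\psi(a)$ for all $a,b\in G$: the images of $\varphi$ and $\psi$ commute elementwise. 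Secondly, applying $\varphi$ to $\psi(a)=\varphi(a)^{-1}a$ gives $\varphi\psi(a)=\varphi^2(a)^{-1}\varphi(a)$, and applying the identity $\varphi(x)\psi(x)=x$ at $x=\varphi(a)$ yields the same expression for $\psi\varphi(a)$. Hence $\varphi\circ\psi=\psi\circ\varphi$ as endomorphisms of $G$.

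Suppose for contradiction that neither $\varphi$ nor $\psi$ is an automorphism. By Lemma~\ref{automorphism or nilpotent}, for every $n$ the induced endomorphisms $\varphi_n,\psi_n$ on the finite group $H_n:=G/\overline{G^n}$ are both nilpotent. Fix $n$. Because $\varphi\circ\psi=\psi\circ\varphi$, the subgroup $\operatorname{Im}\varphi_n\trianglelefteq H_n$ (normal since $\varphi_n$ is a normal endomorphism) is $\psi_n$-invariant, so $\varphi_n,\psi_n$ descend to endomorphisms $\bar\varphi_n,\bar\psi_n$ of $H_n/\operatorname{Im}\varphi_n$. There $\bar\varphi_n$ is trivial and $\bar\varphi_n\odot\bar\psi_n=\mathrm{id}$, so $\bar\psi_n=\mathrm{id}$; but $\bar\psi_n$ inherits nilpotency from $\psi_n$, which forces $H_n/\operatorname{Im}\varphi_n=\{e\}$, that is, $H_n=\operatorname{Im}\varphi_n=\varphi_n(H_n)$. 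Iterating the nilpotent map $\varphi_n$ then yields $H_n=\varphi_n^k(H_n)=\{e\}$ for $k$ large enough. Since this holds for every $n$ and the subgroups $\overline{G^n}$ form a neighbourhood basis of the identity in the small group $G$, we conclude $G=\{e\}$, in which case the lemma holds vacuously.

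The main technical obstacle is upgrading the elementwise commutation of $\operatorname{Im}\varphi$ and $\operatorname{Im}\psi$ to the endomorphism identity $\varphi\circ\psi=\psi\circ\varphi$. Image commutation follows directly from the homomorphism hypothesis on $\varphi\odot\psi$, but on its own it does not make the passage to the quotient $H_n/\operatorname{Im}\varphi_n$ legitimate; once the endomorphism-level commutation is in hand, the quotient construction transparently converts the nilpotency provided by Lemma~\ref{automorphism or nilpotent} into triviality of $H_n$, and the argument closes.
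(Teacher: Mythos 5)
Your proof is correct, and the overall strategy — normalize so that $\varphi\odot\psi=\mathrm{id}_G$, establish that the two maps commute as endomorphisms, and then exploit Lemma~\ref{automorphism or nilpotent} on each finite quotient $G/\overline{G^n}$ — is the same as the paper's. (The paper normalizes by post-composing with $\gamma=(\varphi\odot\psi)^{-1}$ to get $\Lambda=\varphi\gamma$, $\mu=\psi\gamma$ with $\Lambda\odot\mu=\mathrm{id}$; you pre-compose with $\alpha^{-1}$, which is equally valid.)

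Where you genuinely diverge is the last step. The paper derives $\Lambda_n\mu_n=\mu_n\Lambda_n$ from $\mu\odot\Lambda=\Lambda\odot\mu$ via a pre/post-composition trick, and then asserts (without writing it out) that the $\odot$-sum of two commuting nilpotent normal endomorphisms is nilpotent, contradicting $\Lambda_n\odot\mu_n=\mathrm{id}$ on a nontrivial finite quotient. You instead prove the cleaner identity $\varphi\circ\psi=\psi\circ\varphi$ directly from $\psi(a)=\varphi(a)^{-1}a$, and then, rather than invoking the binomial-type computation for commuting nilpotents, you pass to the quotient $H_n/\operatorname{Im}\varphi_n$, observe that there $\bar\psi_n$ is both the identity and nilpotent, conclude $\varphi_n$ is onto, and combine with the nilpotency of $\varphi_n$ to kill $H_n$. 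Your route replaces the elided combinatorial fact in the paper with a short quotient argument and is arguably a bit more self-contained; the paper's version is slightly shorter in exposition but leaves more to the reader. Both correctly reduce to the same commutation and the same dichotomy from Lemma~\ref{automorphism or nilpotent}, and both handle the trivial-group edge case (the paper by noting some $G/\overline{G^n}$ is nontrivial, you by observing the conclusion holds vacuously).
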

	\begin{proof}
		Let $\gamma:G\to G$ be the inverse of $\varphi \odot \psi$. Then by Lemma \ref{properties of normal} $\gamma$ is normal too, as well as $\Lambda=\varphi \gamma$ and $\mu=\psi \gamma$. Observe that $\Lambda \odot \mu=(\varphi \odot \psi)\gamma=Id_G$. For every $n\in \mathbb{N}$ we denote by $\varphi_n,\psi_n,\gamma_n,\Lambda_n,\mu_n$ the induced endomorphisms $G/\overline{G^n}\to G/\overline{G^n}$. They are all normal. Assume by contradiction that $\varphi$ and $\psi$ are not automorphisms. Then $\Lambda$ and $\mu$ are not automorphisms either. By Lemma \ref{automorphism or nilpotent} we conclude that for every $n$, $\Lambda_n$ and $\mu_n$ are nilpotent. 
		
		Since $Id_G=(\varphi \odot \psi)\gamma=\Lambda \odot \mu$ one has $x=\Lambda(x)\mu(x)$ for every $x\in G$. Taking inverses we get $x^{-1}=\mu(x^{-1})\Lambda(x^{-1})$ for every $x\in G$. Hence $\mu \odot \Lambda=\Lambda \odot \mu$. In particular, $\mu_n \odot \Lambda_n=\Lambda_n  \odot \mu_n$ for every $n$. Moreover, since $\Lambda_n \odot \mu_n=Id_{G/\overline{G^n}}$ then $\Lambda_n(\Lambda_n \odot \mu_n)=(\Lambda_n \odot \mu_n)\Lambda_n$ which implies that $\Lambda_n\mu_n=\mu_n\Lambda_n$. One concludes that $\Lambda_n \odot \mu_n$ is nilpotent and in particular not onto whenever $G/\overline{G^n}$ is nontrivial. Since there must be some $n$ for which $G/\overline{G^n}$ is nontrivial, this is contradiction to $\Lambda \odot \mu=Id_G$.
	\end{proof}
	By induction Lemma \ref{two} generalizes to the following.
	\begin{cor}\label{sum of many normals implies an automorphism}
		Let $G$ be an indecomposable small profinite group. If $\psi_1, \ldots , \psi_n$ are normal endomorphisms whose sum $\psi_1 \odot \cdots \odot \psi_n$ is an automorphism and every partial sum of them is an endomorphism, then at least one of them is an automorphism.
	\end{cor}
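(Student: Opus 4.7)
The plan is to prove the corollary by induction on $n$, with Lemma \ref{two} serving both as the base case $n=2$ and as the key tool in the inductive step.

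Assuming the statement for $n-1$, suppose $\psi_1, \ldots, \psi_n$ satisfy the hypotheses. I would set $\tau := \psi_1 \odot \cdots \odot \psi_{n-1}$, which is an endomorphism by the partial sum assumption, and first verify that $\tau$ is itself normal. By Lemma \ref{properties of normal}(1), the sum of two normal endomorphisms is normal provided the sum is an endomorphism; iterating this observation — $\psi_1 \odot \psi_2$ is normal, hence $(\psi_1 \odot \psi_2) \odot \psi_3$ is normal, and so on up to $\tau$ — while invoking the partial sum hypothesis at each stage to supply the "is an endomorphism" input, I conclude that $\tau$ is a normal endomorphism. Associativity of $\odot$, which is immediate from the pointwise definition, lets me reparenthesize freely.

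With this in hand, the identity $\tau \odot \psi_n = \psi_1 \odot \cdots \odot \psi_n$ presents the given automorphism as a two-term sum of normal endomorphisms, so Lemma \ref{two} applies to $\tau$ and $\psi_n$: either $\psi_n$ or $\tau$ is an automorphism. In the first case I am done. In the second, $\psi_1, \ldots, \psi_{n-1}$ themselves satisfy the hypotheses of the corollary for $n-1$ (every partial sum of this shorter list is also a partial sum of the original list, hence an endomorphism), so the inductive hypothesis delivers an automorphism among $\psi_1, \ldots, \psi_{n-1}$.

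The only non-routine point is the normality check for the iterated sum $\tau$, and even that reduces to a bookkeeping loop of Lemma \ref{properties of normal}(1); no substantive obstacle remains, which is exactly why the author merely remarks that Lemma \ref{two} \emph{generalizes by induction}.
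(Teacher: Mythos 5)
Your proposal is correct and spells out precisely the induction that the paper leaves implicit (the paper offers no proof beyond the phrase ``by induction, Lemma \ref{two} generalizes''); the decomposition $\tau\odot\psi_n$ with $\tau=\psi_1\odot\cdots\odot\psi_{n-1}$, the iterated use of Lemma \ref{properties of normal}(1) to get normality of $\tau$, and the fall-through to the inductive hypothesis when $\tau$ is the automorphism are exactly the intended route.
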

	Now we are ready to prove Theorem \ref{Main Theorem}.
	
	\begin{proof}
		First observe that $I$ (and $J$) must be countable. Indeed, if $G$ is small then it has countably many open subgroups. Assume by contradiction that $I$ is uncountable. For every $i\in I$ choose a proper open subgroup $U_i$ of $G_i$. Then the set $\{U_i\times \prod_{i\ne j\in I}H_j\}_{i\in I}$ is a set of uncountably many open subgroups of $G$, a contradiction. inserting several trival factors to $\{H_i\}_I$ and $\{G_j\}_J$ if necessary we may assume that $I=J= \mathbb N$. 
		
		It is enough to show that for every $i\in I$ there is $j\in J$ such that $H_j \cong G_i$. Let us call it \textit{Property $P$}. Indeed, assume that property $P$ holds, then by symmetry, for every $j\in J$ there exists $i\in I$ such that $G_i\cong H_j$. Now let $K$ be an indecomposable small profinite group. We need to how that the cardinality of the set $\{i \in I \ | \ H_i \cong K\}$ is the same as $\{j \in J \ | \ G_j \cong K\}$.
		
		Case 1: For every $i\in I$, $H_i\ncong K$. Then by Property $P$,  for every $j\in J$, $G_j\ncong K$.
		
		Case 2: There are finitely many indices $i\in I$ such that $H_i\cong K$. Without loss of generality assume that $H_1\cong \cdots \cong H_n\cong K$ and for every $i\geq n$ $H_i\ncong K$. We prove the claim by induction on $n$. We have $G= \prod_{i=1}^\infty H_i$. By property $P$ there exists some $j\in J$ such that $H_1\cong G_j$and without loss of generality we can assume that $j=1$. Put $G'=\prod_{i\geq 2}H_i $, $G''=\prod_{j\geq 2}G_j$.  Then $H_1\times G'\cong G_1\times G''$. Observe that the groups $H_1,G_1,G',G''$ are all small as quotients of a small profinite group. Since $H_1\cong G_1$, by cancellation in small profinite groups we conclude that $G'\cong G''$. If $n=1$ then $G'$ has no components isomorphic to $K$, and hence by the first case so does $G''$ and we are done. Otherwise, we apply the induction assumption.
		
		Case 3: There are infinitely many copies of $K$ in $\{H_i \}_{i \in I}$ then by successive cancellation as in case  2 we can conclude that  same is true for $\{G_j\}_{j \in J}$ and we are done.
		
		Now we prove property $P$. Without loss of generality we can assume $i=1$.
		
		Let us denote by $\psi_1:H_1\to G$, $\psi_k':G_k\to G$ the natural embeddings, and by $\pi_1:G\to H_1$, $\pi_k':G\to G_k$ the natural projections. Observe that $f_k:=\pi_1  \psi_k' \pi_k'  \psi_1$ are normal endomorphisms of $H_1$ such that for all $x \in H_1$ $f_k(x) \rightarrow e_{H_1}$ as $k \rightarrow \infty$. It follows that the infinite sum $\bigodot_{k \in J} f_k$ is converging and equals $Id_{H_1}$, in particular it is an automorphism of $H_1$. 
		
		Let $n\in \mathbb{N}$ be such that $H_1/\overline{H_1^n}$ is nontrivial. Since $\overline{H_1^n}$ is open in $H_1$ there is a finite set $T\subseteq J$ such that for every $k\in J\setminus T$, the induced endomorphism  $f_k$ is trivial on $H_1/\overline{H_1^n}$. Then $\bigodot_{ k \in T} f_k$ is the identity map, and in particular an automorphism, on $H_1/\overline{H_1^n}$. In particular, it is not nilpotent. 
		By Lemma \ref{automorphism or nilpotent} we conclude that $\bigodot_{k \in T} f_k$ is an automorphism on $H_1$. In addition, $\bigodot_{k \in S} f_k$ is a normal endomorphism on $H_1$ for every $S\subseteq T$. By Corollary \ref{sum of many normals implies an automorphism} one of the summands $f_k$ must be an automorphism of $H_1$. Assume that $f_j=\pi_1 \psi_j' \pi_j'  \psi_1 \in \mathrm{Aut}(H_1)$.
		
		The only thing left to show is that $\pi_j' \psi_1$ is an isomorphism. Let $\gamma=f_j^{-1}$. Then $(\gamma \pi_1 \psi_j' )(\pi_j'\psi_1)=Id_{H_1}$. We shall show that $(\pi_j' \psi_1)(\gamma \pi_1 \psi_j' )=Id_{G_j}$. Put $\sigma =(\pi_j'\psi_1)(\gamma \pi_1 \psi_j' ) \in \mathrm{End}(G_j)$. Then $\sigma^2=\sigma$. One checks that $\sigma$ is normal. Hence, since $\operatorname{Im}\sigma$ is normal subgroup of $G_j$, Lemma \ref{Standard exercise} gives $G_j\cong \ker \sigma\oplus \operatorname{Im}\sigma$. Since $G_j$ is indecomposable, either $\operatorname{Im}\sigma=G_j$ or $\ker\sigma=G_j$. If $\operatorname{Im}\sigma=G_j$ then $\sigma$ is an epimorphism and thus an automorphism. Hence since $\sigma^2=\sigma$, $\sigma=Id_{G_1}$. Otherwise $\sigma$ is the trivial endomorphism. But this is impossible since $$Id_{H_1}=Id_{H_1}^2= (\gamma \pi_1 \psi_j'\pi_j'\psi_1)^2=(\gamma \pi_1 i_1' )\sigma (\pi_1'i_1)$$ and $H_1$ is nontrivial.
		
		This concludes the proof of Theorem \ref{Main Theorem}.
	\end{proof}
	We do not know if Theorem \ref{Main Theorem} applies more generally and ask:
	\begin{ques} \label{uniqueness in the general case}
		Let $G$ be a (non-small) profinite group. Assume that $G\cong\prod_JG_i\cong\prod_JH_j$ such that for all $i\in I,j\in J$ $G_i,H_j$ are nontrivial indecomposable profinite groups. Is there a bijective function $h:I\to J$ such that $G_i\cong H_{h(i)}$ for all $i\in I$? 
	\end{ques}
	\begin{rem}
		It is worth mentioning that for profinite abelian groups Question \ref{uniqueness in the general case} has a positive answer. Indeed every profinite abelian group $G$ is a product of its Sylow subgroups and hence it suffices to consider abelian pro-$p$ group. By \cite[Theorem 10]{kaplansky2018infinite} the only indecomposable abelian pro-$p$ groups are $\Z_p$ and the finite cyclic groups $C_{p^n}$. Hence we may assume that $G= (\Z_p)^{\alpha} \times \prod_{k=1}^\infty (C_{p^k})^{\beta_k}$ for some cardinals $\alpha$ and $\beta_k$. But then $G_0:=\prod_{k=1}^\infty (C_{p^k})^{\beta_k}$ is uniquely determined as the closure of the torsion elements of $G$ and in particular $\alpha$ is the rank of the free abelian pro-$p$ group $G/G_0$ while $\beta_k$ is the rank of the kernel of the map $f: G_0^{p^{k-1}}/G_0^{p^{k}} \rightarrow G_0^{p^{k}}/G_0^{p^{k+1}}$ induced by the map $g \mapsto g^p$.
	\end{rem}
	\section{Cancellation of free products} \label{free products}
	Cancellation of direct factors in small profinite groups played an important role in our proof of Theorem \ref{Main Theorem}. In this section we prove some more results regarding cancellation. We focus mainly on cancellation of some free pro-$\mathcal{C}$ products, where $\mathcal C$ is an extension closed variety of finite groups. Recall the definition of a free pro-$\mathcal{C}$ product over a sheaf of pro-$\mathcal{C}$ groups from \cite[Chapter 5]{ribes2017profinite}. First we need some results on the centralizers in a free pro-$p$ product.
	\begin{thm}\label{centalizers in free product}
		Let $F$ be a pro-$p$ group which is a non trivial free pro-$p$ product. Assume that $F=G_1\coprod_{p} G_2$ and let $x\in F$ be an element which is not in the conjugate class of any $G_i$. Then $C_F(x)$ is infinite pro-cyclic.
	\end{thm}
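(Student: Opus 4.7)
The plan is to apply the pro-$p$ analogue of Bass--Serre theory, as developed by Ribes and Zalesskii, to the standard pro-$p$ tree $T$ on which $F = G_1\coprod_p G_2$ acts. On this tree the vertex stabilizers are the conjugates of $G_1$ and $G_2$, and the edge stabilizers are trivial.

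First I would establish that $x$ has infinite order. This follows from the pro-$p$ analogue of the classical torsion theorem for free products: every torsion element of a nontrivial free pro-$p$ product is conjugate into one of the free factors. Since $x$ is assumed not to be conjugate into any $G_i$, it must have infinite order, so $\overline{\langle x\rangle}\cong \Z_p$.

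Next, because $x$ fixes no vertex of $T$ (as $x$ is not conjugate into any vertex stabilizer) and no edge (edge stabilizers being trivial), $x$ acts hyperbolically on $T$. By pro-$p$ Bass--Serre theory there is then a canonical minimal $\langle x\rangle$-invariant subtree $L_x\subseteq T$ --- the axis of $x$ --- which is a pro-$p$ line on which $x$ acts by a nontrivial translation. Any $g\in C_F(x)$ sends axes of $x$ to axes of $gxg^{-1}=x$, and by uniqueness $g(L_x)=L_x$. So $C_F(x)$ acts continuously on $L_x$. The kernel of this action lies inside every edge stabilizer of $L_x$, which is trivial, so the action is faithful. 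Since every $g\in C_F(x)$ commutes with the translation $x$, it too acts on $L_x$ as a translation; the translation subgroup of $\operatorname{Aut}(L_x)$ is isomorphic to $\Z_p$, producing an embedding $C_F(x)\hookrightarrow \Z_p$. Combined with $x\in C_F(x)$ having infinite order, this forces $C_F(x)$ to be an infinite pro-cyclic group, as desired.

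The main obstacle is the pro-$p$ Bass--Serre input: one must know that a hyperbolic element of $F$ on the standard pro-$p$ tree admits a canonical axis, that centralizing elements preserve this axis continuously, and that the translation subgroup of the automorphism group of a pro-$p$ line is $\Z_p$. These facts are not as immediate as in the classical Bass--Serre setting, but they follow from the theory of profinite graphs and group actions on pro-$p$ trees of Ribes--Zalesskii. Once these tools are in place, the argument is a direct transcription of the familiar classical proof for ordinary free products.
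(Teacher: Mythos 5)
Your proof takes a genuinely different route from the paper's. The paper avoids the axis/translation machinery altogether: for an arbitrary finite collection $y_1,\ldots,y_n\in C_F(x)$ it forms $H=\overline{\langle x,y_1,\ldots,y_n\rangle}$, applies the pro-$p$ Kurosh subgroup theorem to $H\leq F$, and shows that every Kurosh factor of the form $H\cap hG_ih^{-1}$ must be trivial (since a nontrivial element there would have $x$ in its centralizer and so would push $x$ into a conjugate of $G_i$, contradicting the hypothesis). Thus $H$ is free pro-$p$; but $x$ is central in $H$ by construction, and a free pro-$p$ group with nontrivial center is $\mathbb{Z}_p$, so $H\cong\mathbb{Z}_p$. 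Procyclicity of $C_F(x)$ then follows by testing on finite quotients. This is short and self-contained given the cited results.

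Your approach, via the action on the standard pro-$p$ tree, is morally the classical one, but the points you flag at the end are precisely where the gaps lie, and they are more than cosmetic. A pro-$p$ tree is an inverse limit of finite graphs and carries no geodesic metric; the minimal $\overline{\langle x\rangle}$-invariant subtree (if one carefully establishes its existence and uniqueness) is an inverse limit of cycles, not a combinatorial line, so "translation length'' and "axis'' do not transfer verbatim. Likewise, the assertion that the translation subgroup of $\operatorname{Aut}(L_x)$ is $\mathbb{Z}_p$, and that $C_F(x)$ lands in it faithfully, require arguments about the automorphism groups of inverse limits of finite cycle graphs (and a separate argument ruling out order-two "reflections'' when $p=2$). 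You acknowledge these as nontrivial and defer to Ribes--Zalesskii, but you do not identify which results supply them, and they are not packaged in the literature as a single citable lemma. Until those specific inputs are pinned down, the argument has a gap precisely where the pro-$p$ case diverges from the abstract one; the paper's Kurosh-based proof sidesteps all of this.
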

	\begin{proof}
		Let $y_1,...,y_n\in C_F(x)$. Consider $H=\overline{\langle x,y_1,...,y_n\rangle }$. Since $H$ is finitely generated, by \cite[Theorem 9.6.2]{ribes2017profinite}, there is complete set of representatives $\{h_{j,i} \}_{j \in I_i}$ for the double cosets $D_i= H \backslash F /G_i$, such that  $H=[\coprod_{D_1 \cup D_2}(H_p\cap h_{j,i}G_ih_{j,i}^{-1} )]\coprod F_p$, where $F_p$ is a free pro-$p$ group. Now assume that for some pair $j,i$ we have $H\cap h_{j,i}G_ih_{j,i}^{-1}\ne \{e\}$ and let $g\in H\cap h_{j,i}G_ih_{j,i}^{-1}$. In particular $g\in h_{j,i}G_ih_{j,i}^{-1}$ which is a free factor of $F$. Hence by \cite[Theorem 9.1.12]{ribes2017profinite}, $ C_F(g)=C_{h_{j,i}G_ih_{j,i}^{-1}}(g)\leq h_{j,i}G_ih_{j,i}^{-1}$. In particular, $x\in C_F(g)\leq h_{j,i}G_ih_{j,i}^{-1}$, a contradiction. Thus $H=F_p$ is a free pro-$p$ group. Since $H$ has a nontrivial center by \cite[Corollary 8.7.3]{ribes2000profinite} $H \cong\Z_p$. 
		
		Now let $A$ be a finite continuous quotient of $C_F(x)$. There are $y_1,...,y_n\in C_F(x)$ such that  $H=\overline{\langle x,y_1,...,y_n\rangle }$ projects over $A$. Thus $A$ is cyclic. So   $C_F(x)$ is pro-cyclic. It is infinite since it contains an infinite subgroup.
	\end{proof}
	
	\begin{thm}\label{free pro-p product can be canceled}
		A nontrivial free pro-$p$ product can be cancelled in direct product. 
	\end{thm}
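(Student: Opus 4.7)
The plan is to reduce the cancellation of a nontrivial free pro-$p$ product $F$ from direct products to a canonical identification of the complementary factor, using the pro-cyclic centralizer condition established in Theorem \ref{centalizers in free product}. Write $F = G_1 \coprod_p G_2$ with $G_1, G_2$ nontrivial, and suppose $F \times A \cong F \times B$ in the category of pro-$p$ groups; the goal is $A \cong B$.

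First I would verify that $Z(F) = \{e\}$. For any $z \in Z(F)$ and any nontrivial $g \in G_1$, the same argument used in the proof of Theorem \ref{centalizers in free product} (via \cite[Theorem 9.1.12]{ribes2017profinite}) shows $C_F(g) \leq G_1$, so $z \in G_1$, and by symmetry $z \in G_2$; since $G_1 \cap G_2 = \{e\}$ in the free pro-$p$ product we get $z = e$. As a consequence $F$ is indecomposable as a pro-$p$ group: any nontrivial pro-$p$ direct factor would contribute a nontrivial center, contradicting $Z(F) = \{e\}$.

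The main step is to identify the complementary factor of $F$ inside $H := F \times A$ intrinsically. Let $\mathcal{S} \subseteq F$ be the set of elements not conjugate into any $G_i$; by Theorem \ref{centalizers in free product}, $C_F(x)$ is infinite pro-cyclic for every $x \in \mathcal{S}$, so in $H$ we have $C_H((x,e)) = C_F(x) \times A$. Intersecting over all $x \in \mathcal{S}$ yields
\[
\bigcap_{x \in \mathcal{S}} C_H((x,e)) \;=\; \Bigl(\bigcap_{x \in \mathcal{S}} C_F(x)\Bigr) \times A \;\subseteq\; Z(F) \times A = \{e\} \times A,
\]
with the reverse inclusion immediate, so this intersection is canonically $A$.

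The main obstacle is to make this identification truly intrinsic, so that it applies equally to the second decomposition $H \cong F \times B$ produced by the given isomorphism $\phi: F \times A \to F \times B$, whose image $\phi(F \times 1)$ is a second ``$F$-factor'' of $H$ with complement $\phi(\{e\}\times A)$. I would try to characterise generic (non-factor-conjugate) elements of an $F$-factor of $H$ purely by an isomorphism-type invariant of centralizers (for instance, those non-identity $h \in H$ whose centralizer is isomorphic to $\mathbb{Z}_p \times X$ for $X$ having the isomorphism type of the complement), so that the centralizer-intersection construction above depends only on $H$ and produces subgroups isomorphic to both $A$ and $B$. If this direct characterisation proves elusive, an alternative is to combine the indecomposability of $F$ with the normal-endomorphism toolkit developed for Lemmas \ref{two} and \ref{sum of many normals implies an automorphism}: the restriction $\pi_F \phi\, \iota_F : F \to F$ is an endomorphism of the indecomposable group $F$, and a careful analysis using Theorem \ref{centalizers in free product} to control how the off-diagonal projections $\pi_F \phi\, \iota_A$ and $\pi_B \phi\, \iota_F$ centralize the image of $\pi_F \phi\, \iota_F$ should force $\phi$ to descend to an isomorphism $A \to B$.
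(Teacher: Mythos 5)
There are two concrete problems with the proposal, and they are fatal in their current form. First, the inference that $Z(F)=\{e\}$ forces $F$ to be indecomposable is not valid: a nontrivial pro-$p$ group can very well have trivial center (a free pro-$p$ group of rank at least $2$ does, and so does $F$ itself), so a hypothetical decomposition $F\cong F_1\times F_2$ with both factors centerless is not ruled out by the center computation. Indecomposability of $F$ is true, but it requires the stronger statement the paper actually proves, namely that a nontrivial free pro-$p$ product cannot be generated by two nontrivial element-wise commuting subgroups; the center argument alone does not give this. Second, and more seriously, the key step of the proposal---making the centralizer-intersection construction $\bigcap_{x\in\mathcal{S}}C_H((x,e))$ intrinsic to $H$---is only stated as a goal, not achieved. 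The set $\mathcal{S}$ is defined using the chosen copy of $F$ inside $H$, so applying the same recipe to the second decomposition $H\cong F\times B$ requires first identifying the elements of $\phi(F\times\{e\})$ that are not conjugate into a free factor, which is exactly the information one does not have a priori. Both suggested work-arounds (an isomorphism-type characterisation of generic centralizers, or the normal-endomorphism machinery) are left as sketches with no indication they can be completed, so as written the proof has a genuine gap at its central point.

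For comparison, the paper sidesteps all of this: given an isomorphism $A\times F\cong B\times F'$, compose with the quotient map onto $F'\cong (A\times F)/B$ to obtain two element-wise commuting subgroups $A_1=AB/B$ and $F_1=FB/B$ generating $F'$. Theorem~\ref{centalizers in free product} (together with \cite[Theorem~9.1.12]{ribes2017profinite}) then shows directly that a nontrivial free pro-$p$ product cannot be generated by two nontrivial commuting subgroups, so one of $A_1,F_1$ is trivial; the two resulting cases ($A\subseteq B$ or $F\subseteq B$) are handled by short direct computations. This avoids any need to characterise $A$ intrinsically inside $H$, and it also yields the indecomposability of $F$ as a by-product rather than relying on it as an input.
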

	\begin{proof}
		The proof is based on the proof of \cite[Theorem 6]{hirshon1977some}.
		
		Let $G$ be a profinite group which can be decomposed as $A\times F$ and $B\times F'$ for $F\cong F'$ a nontrivial free pro-$p$ product. Then $F'\cong G/B= AB/B\cdot FB/B$, a product of two element-wise commuting subgroups. Denote $A_1=AB/B$ and $F_1=FB/B$ for more convenience, and assume first that $A_1,F_1\ne \{e\}$. Decompose $F'$ to a nontrivial free pro-$p$ product as $F'\cong K_1\coprod K_2$. If $A_1$ intersects nontrivialy with some conjugate of a factor, say $A_1\cap gK_ig^{-1}\ne \{e\}$ for some $i=1\lor 2$ and $g\in F'$, then we can take some $e\ne x\in A_1\cap gK_ig^{-1}$. Recalling again that $A_1$ and $F_1$ are element-wise commuting, by \cite[Theorem 9.1.12]{ribes2000profinite} $F_1\leq C_{F'}(x)=C_{gK_ig^{-1}}\leq gK_ig^{-1}$. Moreover, choosing some nontrivial element $y\in F_1$ we conclude by the same way that $A_1\leq gK_ig^{-1}$. Hence we get that $F'\leq gK_ig^{-1}$, which implies that $F'=g^{-1}F'g\leq K_i$, contradicting the nontriviality of the decomposition. By symmetry we conclude that $A_1$ and $F_1$ intersect trivially with every conjugate of every factor. Let $e\ne g\in F_1$, then by \ref{centalizers in free product} $A_1\leq C_{F'}(g)$ which is a pro-cyclic group and in particular abelian. Thus $A_1$ is abelian. Now let $e\ne a\in A_1$. then by the commutativity of $F_1$ with $A_1$, $F_1,A_1\leq C_{F'}(a)$, which is a pro-cyclic group. We conclude that $F'$ is pro-cyclic, contradicting the nontriviality of the decomposition as a free pro-$p$ product. Hence either $A_1$ or $F_1$ are trivial.
		
		First case: $A_1=\{e\}$.  It means that $A\subseteq B$. Factoring out by $A$ we obtain $F\cong B/A\times F'$. As we already saw, a nontrivial free pro-$p$ product cannot be generated by two nontrivial commuting subgroups, hence $B/A=\{e\}$ and $A=B$.
		
		Second case: $F_1=\{e\}$. It means that $F\subseteq B$. We get that $B=(A\cap B)\times F$. So $A\times F=B\times F'=(A\cap B)\times F\times F'$. Dividing by $F$ we get that $A=(A\cap B)\times F'$. Since $F\cong F'$, we conclude that $A\cong B$.  
	\end{proof}
	
	Since a free pro-$p$ group is a free pro-$p$ product of copies of $\Z_p$ (see \cite[Example 5.1.3]{ribes2017profinite}), we obtain
	\begin{cor}
		A free pro-$p$ group of rank greater than 1 is cancellable.
	\end{cor}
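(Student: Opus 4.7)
The plan is to simply observe that the corollary is an immediate consequence of Theorem \ref{free pro-p product can be canceled} once we identify a free pro-$p$ group of rank $\geq 2$ as a nontrivial free pro-$p$ product.

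First I would invoke the cited fact from \cite[Example 5.1.3]{ribes2017profinite} that any free pro-$p$ group $F$ on a (possibly infinite) basis $X$ is naturally isomorphic to the free pro-$p$ product of the family $\{\langle x\rangle\}_{x \in X}$, each factor being a copy of $\Z_p$. When the rank of $F$ exceeds one, the indexing set $X$ has at least two elements, so this expresses $F$ as a free pro-$p$ product of the form $G_1\coprod_p G_2$ with both $G_1,G_2$ nontrivial (grouping the remaining factors together into $G_2$ if $X$ is infinite, still as a free pro-$p$ product of copies of $\Z_p$).

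Having done this, I would conclude directly by Theorem \ref{free pro-p product can be canceled}: if $F\times A \cong F \times B$ for profinite groups $A,B$, then since $F$ is a nontrivial free pro-$p$ product, cancellation yields $A\cong B$.

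There is no substantial obstacle here; the only point that warrants a moment of care is the infinite rank case, where one must make sure the identification of $F$ as the free pro-$p$ product of copies of $\Z_p$ is understood in the sense of \cite[Chapter 5]{ribes2017profinite} (i.e. the restricted free pro-$p$ product over a sheaf), so that Theorem \ref{free pro-p product can be canceled} — which is stated for a decomposition $G_1 \coprod_p G_2$ — genuinely applies. This is handled by bundling all but one of the basis elements into a single free pro-$p$ factor.
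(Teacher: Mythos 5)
Your argument is exactly the one the paper uses: cite \cite[Example 5.1.3]{ribes2017profinite} to identify a free pro-$p$ group of rank $\geq 2$ as a nontrivial free pro-$p$ product (of copies of $\Z_p$), then apply Theorem \ref{free pro-p product can be canceled}. The extra care you take in the infinite-rank case, bundling all but one basis element into a single free pro-$p$ factor so that a two-factor decomposition $G_1 \coprod_p G_2$ is available, is a reasonable precision that the paper leaves implicit.
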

	We can generalize the above corollary to free pro-$\mathcal{C}$ groups for some other varieties. First we need the following proposition: 
	\begin{prop}\label{free product generated by commuting subgroups}
		Let $\pi$ be a set of primes and $\mathcal{C}$ be an extension closed variety of finite groups which contains $C_p$ for every $p\in \pi$. Let $F$ be a pro-$\pi$ group which is a non trivial free pro-$\mathcal{C}$ product. If $F$ is generated by two commuting subgroups $A,B$, then either every $p$-Sylow subgroup is infinite pro-cyclic or $F\cong A\times B$ and $(o(A),o(B))=1$. 
	\end{prop}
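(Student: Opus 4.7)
The plan is to mimic the argument of Theorem~\ref{free pro-p product can be canceled} by passing to Sylow pro-$p$ subgroups and thereby reducing the pro-$\mathcal{C}$ setting to the pro-$p$ one. First I would dispose of the trivial case in which $A$ or $B$ is trivial, and write $F = G_1 \coprod_{\mathcal{C}} G_2$ as a nontrivial free pro-$\mathcal{C}$ product (the sheaf version is analogous). For each prime $p \in \pi$, let $F_p$ be a Sylow pro-$p$ subgroup of $F$. Since $\mathcal{C}$ is extension closed and contains $C_p$ for every $p \in \pi$, a Kurosh-type argument (\cite[Theorem 9.6.2]{ribes2017profinite}) describes $F_p$ as a free pro-$p$ product of conjugates of Sylow pro-$p$ subgroups of $G_1, G_2$ together with a (possibly trivial) free pro-$p$ part. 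Because $A$ and $B$ commute elementwise and $F = AB$, profinite Sylow theory lets me choose commuting Sylow pro-$p$ subgroups $A_p \leq A$ and $B_p \leq B$ with $F_p = A_p B_p$.

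Inside each $F_p$ I would then replay the centralizer analysis of the proof of Theorem~\ref{free pro-p product can be canceled}. If $A_p$ met a conjugate $gHg^{-1}$ of a free pro-$p$ factor $H$ of $F_p$ nontrivially, then \cite[Theorem 9.1.12]{ribes2017profinite} together with the elementwise commutativity of $A_p$ and $B_p$ would push first $B_p$ and then $A_p$ into $gHg^{-1}$, contradicting the nontrivial free pro-$p$ product structure of $F_p$. Hence $A_p$ and $B_p$ avoid all conjugates of free factors; by Theorem~\ref{centalizers in free product}, every nontrivial element of $A_p$ or $B_p$ has infinite pro-cyclic centralizer in $F_p$, and since $A_p$ centralizes $B_p$ elementwise, this forces $F_p = \langle A_p, B_p \rangle$ itself to be infinite pro-cyclic, unless one of $A_p$ or $B_p$ is trivial.

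This yields, for every $p \in \pi$, the dichotomy: either $F_p$ is infinite pro-cyclic, or one of $A_p, B_p$ is trivial. If the first alternative holds at every $p \in \pi$ we obtain the first case of the statement. Otherwise fix $p_0$ with $F_{p_0}$ not infinite pro-cyclic; without loss of generality $B_{p_0} = \{e\}$, so $p_0 \nmid o(B)$. The main obstacle I expect is closing out the remaining \emph{mixed} primes $q$ at which $F_q$ is infinite pro-cyclic but both $A_q$ and $B_q$ could a priori be nontrivial. To rule this out I would take nontrivial $a \in A_{p_0}$ and $b \in B_q$ of coprime prime-power orders, observe that $ab \in F$ is a torsion element of mixed order since $a, b$ commute, and then invoke the fact that every torsion element of a nontrivial free pro-$\mathcal{C}$ product is conjugate into a free factor, combined with the pro-$\mathcal{C}$ centralizer structure, to force $B_q$ (or $A_q$) to be trivial. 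Once this is established for every $q \in \pi$, we conclude $\gcd(o(A), o(B)) = 1$, whence $A \cap B = \{e\}$ and $F = A \cdot B = A \times B$.
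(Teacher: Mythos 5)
The proposal takes a genuinely different route from the paper and in doing so runs into several real gaps.

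\textbf{What the paper does.} The paper never attempts to decompose the whole Sylow pro-$p$ subgroup $F_p$ as a free pro-$p$ product. Instead it works entirely inside $F$: it first shows, using the commutativity of $A$ and $B$ and \cite[Theorem 9.1.12]{ribes2017profinite}, that a nontrivial $g\in A_p$ lies in no conjugate of a free factor $K_i$; it then takes a $p$-Sylow subgroup $P$ of $C_F(g)$ containing $\overline{\langle g\rangle}B_p$ and applies the Kurosh subgroup theorem of \cite[Theorem 9.6.2]{ribes2017profinite} only to the \emph{finitely generated} pro-$p$ subgroups $H=\overline{\langle g,y_1,\dots,y_n\rangle}\leq P$, showing each such $H$ is a free pro-$p$ group with nontrivial centre and hence $\cong \Z_p$. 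This forces $P$ to be pro-cyclic, then $B_p$ abelian, and a second pass with $g'\in B_p$ shows $A_pB_p$ (a full $p$-Sylow of $F$) is pro-cyclic.

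\textbf{Where your argument diverges and where it breaks.} You instead pass immediately to the Sylow pro-$p$ subgroup $F_p$ of $F$ and try to decompose $F_p$ itself as a free pro-$p$ product and then invoke Theorem~\ref{centalizers in free product}. Three problems: (i) the Kurosh theorem you cite, \cite[Theorem 9.6.2]{ribes2017profinite}, is a statement about \emph{finitely generated} closed subgroups, and $F_p$ is in general not finitely generated, so the decomposition of $F_p$ you rely on is not available at that level of generality; (ii) even granting a Kurosh-type decomposition, you still need to know that $F_p$ is a \emph{nontrivial} free pro-$p$ product before Theorem~\ref{centalizers in free product} applies -- the dichotomy you draw (``$F_p$ infinite pro-cyclic or one of $A_p,B_p$ trivial'') slides from ``$F_p$ is not a nontrivial free pro-$p$ product'' to ``$F_p$ is infinite pro-cyclic'' without justification; (iii) the mixed-primes repair does not work: you propose to pick nontrivial $a\in A_{p_0}$ and $b\in B_q$ of coprime prime-power orders and appeal to the conjugacy of torsion elements into free factors, but $A_{p_0}$ and $B_q$ are Sylow pro-$p$ subgroups and may be torsion-free (for instance $\cong\Z_{p_0}$), so no such torsion elements need exist.

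You are right to flag the mixed-primes issue -- the argument in the paper, as written, only shows that the $p$-Sylow is infinite pro-cyclic for \emph{the particular prime} $p$ at which both $A_p$ and $B_p$ are nontrivial, and it is not immediate from the text that the conclusion holds for every prime simultaneously. But the torsion-element device is not a viable way to close that gap, and the reduction to $F_p$ that underpins the rest of your argument is not justified by the cited theorem.
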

	\begin{proof}
		Since $F$ is isomorphic to an image of $A\times B$, then for every $p$, there exists a $p$-Sylow subgroup of $F$ of the form $A_pB_p$ for $A_p,B_p$ which are $p$-Sylow subgroups of $A,B$ correspondingly. Consider the decomposition $F=K_1\coprod_{\mathcal{C}}K_2$. Fix some prime $p\in \pi$ and assume by contradiction that $A_p,B_p\ne \{e\}$. Let $g\in A_p$. First observe that $g$ is not in any conjugate of $K_i$, for otherwise, by  \cite[Theorem 9.1.12]{ribes2000profinite}, $B\leq C_F(g)=C_{x^{-1}K_ix}(g)\leq x^{-1}K_ix$. But then for every $b\in B$, $A\leq C_F(b)=C_{x^{-1}K_ix}(b)\leq x^{-1}K_ix$ and we get that the decomposition of $F$ is trivial.
		
		Now let $P$ be a $p$-Sylow subgroup of $C_F(g)$ containing $\overline{\langle g\rangle }B_p$- observe that this is indeed a pro-$p$ group in the centralizer. 
		Let $y_1,...,y_n\in P$ and let $H=\overline{\langle g,y_1,...,y_n\rangle }$. Since $H$ is finitely generated, by \cite[Theorem 9.6.2]{ribes2017profinite}, there is complete set of representatives $\{h_{j,i} \}_{j \in I_i}$ for the double cosets $D_i= H \backslash F /K_i$, such that  $H=[\coprod_{D_1 \cup D_2}(H_p\cap h_{j,i}K_ih_{j,i}^{-1} )]\coprod F_p$, where $F_p$ is a free pro-$p$ group. Now assume that for some pair $j,i$ we have $H\cap h_{j,i}K_ih_{j,i}^{-1}\ne \{e\}$ and let $x \in H\cap h_{j,i}K_ih_{j,i}^{-1}$. In particular $x\in h_{j,i}K_ih_{j,i}^{-1}$ which is a free factor of $F$. Hence by \cite[Theorem 9.1.12]{ribes2017profinite}, $ g \in C_F(x)=C_{h_{j,i}K_ih_{j,i}^{-1}}(g)\leq h_{j,i}K_ih_{j,i}^{-1}$, a contradiction to the choice of $g$. Thus $H=F_p$ is a free pro-$p$ group. Since $H$ has a nontrivial center by \cite[Corollary 8.7.3]{ribes2000profinite} $H \cong\Z_p$.

		Now let $T$ be a finite continuous quotient of $P$. There are $y_1,...,y_n$ such that  $H=\overline{\langle g,y_1,...,y_n\rangle }$ projects over $T$. Thus $T$ is cyclic. So $P$ is pro-cyclic. As $B_p\leq P$, $B_p$ is commutative. Now let $g'\in B_p$. By the same proof every $p$- Sylow subgroup of $C_{F}(g')$ is isomorphic to $\mathbb{Z}_p$. But $A_pB_p$ is a $p$-Sylow subgroup of $F$ and hence of $C_F(g').$ In particular, every $p$-Sylow subgroup of $F$ is infinite procyclic.  
		
		Otherwise, $(o(A),o(B))=1$. In particular, $A\cap B=\{e\}$ and $F\cong A\times B$.
	\end{proof}
	\begin{thm}
		Let $\mathcal{C}$ be an extension-closed variety which is not contained in the variety of solvable groups (for example, the variety of all finite groups) and denote by $\pi(\mathcal{C})$ the set of primes involved in $\mathcal{C}$. Assume further that for every $p\in \pi(\mathcal{C})$, $C_p\in \mathcal{C}$. Then a free pro-$\mathcal{C}$ group of rank greater than one can be cancelled in direct products.
	\end{thm}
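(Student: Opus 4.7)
The plan is to adapt the pro-$p$ argument of Theorem \ref{free pro-p product can be canceled}: write the given isomorphism as two direct product decompositions of a single profinite group $G = A \times F \cong B \times F'$ and apply Proposition \ref{free product generated by commuting subgroups} to the quotient $G/B \cong F'$. Since the images $A_1 := AB/B$ and $F_1 := FB/B$ are closed commuting subgroups generating $F' \cong F$, and $F'$ is a nontrivial free pro-$\mathcal{C}$ product of rank $\geq 2$, Proposition \ref{free product generated by commuting subgroups} yields one of two alternatives: either every $p$-Sylow subgroup of $F'$ is infinite procyclic, or $F' \cong A_1 \times F_1$ with $(o(A_1), o(F_1)) = 1$.

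The first alternative is dismissed at once: for any $p \in \pi(\mathcal{C})$ the group $C_p \times C_p$ lies in $\mathcal{C}$ by extension-closedness and is $2$-generated, hence a continuous quotient of $F'$, so the $p$-Sylow of $F'$ cannot be procyclic.

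In the second alternative I want to show one of $A_1, F_1$ is trivial. Suppose for contradiction both are nontrivial. By the non-solvability hypothesis, $\mathcal{C}$ contains a non-abelian finite simple group $T$; since every non-abelian finite simple group is $2$-generated, $T$ is a continuous quotient of $F'$. Because $A_1$ and $F_1$ have coprime orders, every closed normal subgroup of $A_1 \times F_1$ splits as the product of a closed normal subgroup of $A_1$ and one of $F_1$, so every finite quotient of $F'$ is a direct product of coprime-order quotients of $A_1$ and $F_1$; simplicity of $T$ then forces $\pi(T) \subseteq \pi(A_1)$ after relabelling. Pick $q \in \pi(F_1)$, so $q \notin \pi(T)$, and consider $G_0 := T \wr C_q = T^q \rtimes C_q$. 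This lies in $\mathcal{C}$ (extension-closedness applied to $1 \to T^q \to G_0 \to C_q \to 1$) and is $2$-generated by a standard Gasch\"utz-type argument (the normal subgroup $T^q$ is the normal closure in $G_0$ of two lifts of a generating pair of $T$), so it is a quotient of $F'$ and must therefore decompose as $G_0 = Q_1 \times Q_2$ with $\pi(Q_1) \subseteq \pi(A_1)$ and $\pi(Q_2) \subseteq \pi(F_1)$. Since $\pi(G_0) = \pi(T) \sqcup \{q\}$, $Q_2$ is a $\{q\}$-group of order dividing $q$: if $|Q_2| = 1$ then $q \in \pi(G_0) \subseteq \pi(A_1)$, contradicting $q \in \pi(F_1)$; and if $|Q_2| = q$ then $Q_1$ must be the unique $\pi(T)$-Hall subgroup $T^q$ of $G_0$, so $Q_2 \leq C_{G_0}(T^q)$, which is trivial because $T$ is centerless and the cyclic permutation action of $C_q$ on $T^q$ is nontrivial, again contradicting $|Q_2| = q$. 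Hence $A_1$ or $F_1$ is trivial.

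The conclusion $A \cong B$ then follows exactly as in the pro-$p$ case. If $F_1 = 1$, then $F \leq B$, so $B = (A \cap B) \times F$ internally in $G$; quotienting the two decompositions $G = A \times F = (A \cap B) \times F \times F'$ by the common direct factor $F$ yields $A \cong (A \cap B) \times F' \cong (A \cap B) \times F = B$. If $A_1 = 1$, then $A \leq B$ and $F \cong G/A \cong (B/A) \times F'$, and applying Proposition \ref{free product generated by commuting subgroups} to the commuting subgroups $B/A$ and $F'$ of $F$ and rerunning the above dichotomy (with $F'$ nontrivial) forces $B/A = 1$, i.e., $A = B$. The main technical obstacle is verifying the $2$-generation of $T \wr C_q$, the step where the non-solvability hypothesis on $\mathcal{C}$ is used crucially.
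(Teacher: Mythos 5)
Your proof is correct and reaches the same dichotomy (via Proposition~\ref{free product generated by commuting subgroups}), but you dispatch both horns differently from the paper. For the infinite procyclic Sylow case, the paper invokes Hall's theorem (a finite group with all cyclic Sylows is solvable) together with the non-solvability of $\mathcal{C}$; you instead observe that $C_p\times C_p\in\mathcal{C}$ is a two-generated quotient of $F$, which directly rules out procyclic $p$-Sylows -- this is more elementary and does not use non-solvability at all. For the coprime direct product case $F'\cong A_1\times F_1$, the paper cites \cite[Proposition 8.7.7]{ribes2000profinite} to say a free pro-$\mathcal{C}$ group of rank $>1$ admits no nontrivial direct decomposition; you instead build an explicit two-generated group $T\wr C_q\in\mathcal{C}$ (using extension-closure, Steinberg, and $q\in\pi(F_1)$) that cannot be a coprime direct product, arriving at a contradiction without invoking the indecomposability result. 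Both routes are valid, and yours is more self-contained. Two remarks. First, your justification of $2$-generation of $T\wr C_q$ is only a hint: the fact that $T^q$ is the normal closure of two lifts of a generating pair of $T$ does not by itself yield $d(T\wr C_q)=2$. A complete argument: take $T=\langle a,b\rangle$, set $g=(a,1,\dots,1)\sigma$ and $h=(b,1,\dots,1)$; then $\langle g,h\rangle$ projects onto $C_q$, while the $g$-conjugates of $h$ and the element $g^q=(a,\dots,a)$ show that $N:=\langle g,h\rangle\cap T^q$ is subdirect in $T^q$ and contains $(b,1,\dots,1)$, so (since $T$ is simple and non-abelian) $N$ cannot involve any diagonal block, forcing $N=T^q$. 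Second, your strategy in fact shows the non-solvability hypothesis is superfluous: instead of $T\wr C_q$, one may take $p\in\pi(A_1)$, $q\in\pi(F_1)$ and use $C_p\wr C_q$, which lies in $\mathcal{C}$ by extension-closure, is two-generated, has $C_{C_p\wr C_q}(C_p^q)=C_p^q$, and therefore admits no nontrivial coprime direct decomposition -- so your argument actually yields a slightly stronger theorem than the one stated.
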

	\begin{proof}
		Let $F$ be a free pro-$\mathcal{C}$ group of rank greater than 1. By [Example 5.1.3, profinite graphs], $F$ is isomorphic to a nontrivial free pro-$\mathcal{C}$ product. Assume that $A\times F\cong A_1\times F_1$ and let $A'=A_1A/A, F'=F_1A/A$. By Proposition \ref{free product generated by commuting subgroups}, if $A',F'\ne \{e\}$, then either $F$ has pro-cyclic $p$-Sylow subgroups, or $F$ decomposes as a nontrivial free product. If $F$ has pro-cyclic $p$-Sylow subgroups then the same holds for every finite quotient of $F$. Recalling the result of Hall \cite[Theorem 9.4.3]{hall2018theory} which implies that a finite group all whose $p$-Sylow subgroups are cyclic is solvable, we get that all finite quotients of $F$ are solvable. However, by assumption, $\mathcal{C}$ contains non-solvable groups. Since $\mathcal{C}$ is a variety it is closed under taking normal subgroups and quotients, and hence must contain a non-abelian finite simple group. Recall that every finite simple group can be generated by two elements (\cite{steinberg1962generators}). As $F$ is a free pro-$\mathcal{C}$ group, it projects onto all groups in $\mathcal{C}$ with rank less or equal then the rank of $F$. In particular, $F$ admits non-abelian finite simple quotient, which is hence non-solvable. Hence, this case cannot happen. Hence, $F$ decomposes as a nontrivial free product. However, by \cite[Proposition 8.7.7]{ribes2000profinite} for any NE-formation $\mathcal{C}$, a free pro-$\mathcal{C}$ group of rank greater than 1 cannot be decomposed as a nontrivial direct product. We conclude that either $A'$ or $F'$ must be trivial. The rest of the proof is identical to the proof of Theorem \ref{free pro-p product can be canceled}.
	\end{proof}

	The case of a free pro-$p$ group of rank 1 remains open. We present two special cases in which $\Z_p$ can be canceled.
	\begin{lem}
		Assume $\mathbb{Z}_p\times \mathbb{Z}_p\times H\cong \mathbb{Z}_p\times K$. Then $K\cong \mathbb{Z}_p\times H$.
	\end{lem}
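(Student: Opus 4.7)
The plan is to reduce the lemma to a structural statement about central $\Z_p$-factors of $G:=\Z_p\times\Z_p\times H$: whenever $A\le G$ is a closed central subgroup isomorphic to $\Z_p$ and a direct factor of $G$, its complement must be isomorphic to $\Z_p\times H$. Granting this, the lemma is immediate by taking $A:=\phi^{-1}(\Z_p\times\{e\})$ for the given isomorphism $\phi:G\to\Z_p\times K$, since its complement $\phi^{-1}(\{e\}\times K)\cong K$.

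To prove the structural statement I will construct a continuous homomorphism $\Theta:G\to\Z_p$ with $\Theta|_A$ an isomorphism and $\ker\Theta\cong\Z_p\times H$; from this the splitting $G=A\oplus\ker\Theta$ will give $K\cong G/A\cong\ker\Theta\cong\Z_p\times H$. Pick a topological generator $a\in A$ and write $a=(v_1,v_2,z)\in\Z_p\times\Z_p\times Z(H)$, where $\overline{\langle z\rangle}$ is pro-$p$ since $A$ is. The key input is that $A$ is not only a direct factor of $G$ but also of its abelianisation $G^{\mathrm{ab}}=\Z_p^2\times H^{\mathrm{ab}}$, so $a\notin pG^{\mathrm{ab}}$. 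This forces a dichotomy: either some $v_i$ is a unit in $\Z_p$, or both $v_1,v_2\in p\Z_p$ and the image $\bar z$ is primitive in the pro-$p$ part $(H^{\mathrm{ab}})_p$.

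In the first case, say $v_1$ a unit, I take $\Theta(x_1,x_2,h):=x_1$; then $\Theta(a)=v_1$ is a unit, so $\Theta|_A$ is an isomorphism, $G=A\oplus\ker\Theta$, and $\ker\Theta=\{0\}\times\Z_p\times H\cong\Z_p\times H$. In the second case primitivity of $\bar z$ produces (after rescaling) a continuous homomorphism $\alpha:H\to\Z_p$ with $\alpha(z)=1$, and I take $\Theta(x_1,x_2,h):=\alpha(h)$. Centrality of $z$ and the fact that $\alpha|_{\overline{\langle z\rangle}}$ is an isomorphism give the direct product decomposition $H=\overline{\langle z\rangle}\oplus\ker\alpha\cong\Z_p\times\ker\alpha$, from which $\ker\Theta=\Z_p\times\Z_p\times\ker\alpha\cong\Z_p\times H$.

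The main obstacle is the second case: there the generator $a$ is genuinely entangled between all three factors of $G$, and it is not a priori clear that $A$ can be straightened into a coordinate $\Z_p$. What rescues the argument is the combination of two inputs: the direct-factor hypothesis, which forces primitivity of $a$ in $G^{\mathrm{ab}}$ and hence the existence of $\alpha$; and the centrality of the $H$-component $z$, which is precisely what is needed to split an extra $\Z_p$-factor off of $H$.
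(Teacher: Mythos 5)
Your proposal is essentially correct and takes a genuinely different route from the paper. The paper works directly with a topological generator $g$ of the cancelled $\Z_p$-factor, writes $g=y^aw^bh$ in the coordinates of $\Z_p\times\Z_p\times H$, and performs a unimodular change of basis in the $\Z_p^2$-block to arrange $g\in\overline{\langle w'\rangle}\times H$; the modular law then gives $\overline{\langle w'\rangle}\times H=\overline{\langle g\rangle}\times K_1$, and chaining the resulting isomorphisms finishes the proof without ever touching the structure of $H$. You instead construct a continuous retraction $\Theta\colon G\to\Z_p$ onto the factor $A=\overline{\langle a\rangle}$, and the interesting case is when both $\Z_p$-coordinates $v_1,v_2$ of $a$ lie in $p\Z_p$, where you dig into $H$ itself and split off a central $\Z_p$ from $H$ via a homomorphism $\alpha$. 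Both are valid; the paper's change-of-basis argument is shorter and stays entirely in the ambient $\Z_p^2$, while yours is arguably more structural (it exhibits an explicit retraction) and makes visible exactly how the centrality and direct-factor hypotheses are used.

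One step in your write-up is phrased misleadingly, although the idea is salvageable and you do gesture at the fix. From $a\notin pG^{\mathrm{ab}}$ together with $v_1,v_2\in p\Z_p$ you can conclude only that $\bar z\notin pH^{\mathrm{ab}}$, which is strictly weaker than $\bar z$ being primitive (i.e.\ generating a $\Z_p$-direct summand of $(H^{\mathrm{ab}})_p$), and that weaker statement does not by itself yield a homomorphism $\alpha\colon H\to\Z_p$ with $\alpha(z)=1$ (consider a torsion $H^{\mathrm{ab}}$). What you actually need is the full retraction coming from the direct-factor hypothesis: since $A$ is a direct summand of $G^{\mathrm{ab}}=\Z_p^2\times H^{\mathrm{ab}}$, there is a continuous retraction $\rho\colon G^{\mathrm{ab}}\to A\cong\Z_p$ with $\rho(a)=1$; writing $\rho=\rho_1\odot\rho_2\odot\rho_H$ along the three factors, the terms $\rho_1(v_1),\rho_2(v_2)$ lie in $p\Z_p$ when $v_1,v_2\in p\Z_p$, so $\rho_H(\bar z)=1-\rho_1(v_1)-\rho_2(v_2)$ is a unit, and $\alpha:=\rho_H(\bar z)^{-1}\rho_H$ (precomposed with $H\to H^{\mathrm{ab}}$) does the job. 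Your closing paragraph says the direct-factor hypothesis ``forces primitivity of $a$ \dots and hence the existence of $\alpha$,'' which is the right idea; the sentence ``This forces a dichotomy'' attributing it to $a\notin pG^{\mathrm{ab}}$ alone should be replaced by the retraction argument above.
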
	
	\begin{proof}
		The proof is similar to the proof of \cite[Lemma 1]{hirshon1977some}. Write $G=\overline{\langle w\rangle }\times \overline{\langle y\rangle }\times  H=\overline{\langle g\rangle }\times K$. 
		
		First case: $g\notin  \overline{\langle y\rangle }\times H, \overline{\langle w\rangle }\times  H$. Write $g=y^aw^bh$ for $0\ne a,b\in \mathbb{Z}_p$ and $h\in H$. Put $a=p^ta', b=p^sb'$ for $t,s\in \{0\}\cup \mathbb{N}$ and $a',b'\in (\Z_p)^{\times}$. Assume without loss of generality that $s\leq t$.  Put $w'= y^{p^{t-s}a'}w^{b'}$. Then $g=(w')^{p^s}h$.  Observe that since $b'\in (\Z_p)^{\times}$, $\overline{\langle y,w\rangle }=\overline{\langle y,w'\rangle }$. Thus we can replace $w$ by $w'$ and get that $g\in \overline{\langle w'\rangle }\times  H$.
		
		Second case: $g\in  \overline{\langle w\rangle }\times H$. Hence $\overline{\langle w\rangle }\times H=\overline{\langle g\rangle }\times K_1$ for $K_1=K\cap (\overline{\langle w\rangle }\times H)$. Thus, $G=\overline{\langle y\rangle }\times \overline{\langle g\rangle }\times K_1=\overline{\langle g\rangle }\times K$. Dividing both sides by $\overline{\langle g\rangle }$ we conclude that $\overline{\langle y\rangle }\times K_1\cong K$. But $\Z_p\times H\cong \overline{\langle w\rangle }\times H=\overline{\langle g\rangle }\times K_1\cong \overline{\langle y\rangle }\times K_1\cong K$ and we are done.
	\end{proof}
	\begin{lem}
		Let $G_1,G_2$ be profinite abelian groups such that $\Z_p\times G_1\cong \Z_p\times G_2$. Then $G_1\cong G_2$.
	\end{lem}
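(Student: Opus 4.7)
The plan is first to reduce to the pro-$p$ abelian case via Sylow decomposition, and then to apply Pontryagin duality to convert the hypothesis into a cancellation statement about discrete $p$-primary abelian groups, which is settled by the classical divisible/reduced splitting.

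To start, every profinite abelian group decomposes canonically as the direct product of its pro-$q$ Sylow subgroups over all primes $q$ (see e.g.\ \cite[Chapter 2]{ribes2000profinite}). Since $\Z_p$ is itself pro-$p$, taking the pro-$q$ Sylow part of the hypothesis $\Z_p\times G_1\cong \Z_p\times G_2$ immediately gives $(G_1)_q\cong (G_2)_q$ for every prime $q\ne p$, while the pro-$p$ part becomes $\Z_p\times (G_1)_p\cong \Z_p\times (G_2)_p$. Thus I may assume from the outset that $G_1$ and $G_2$ are pro-$p$ abelian groups. Next I would invoke Pontryagin duality between pro-$p$ abelian groups and discrete $p$-primary abelian groups (\cite[Section 2.9]{ribes2000profinite}). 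Under this duality $\Z_p$ corresponds to the Pr\"ufer group $C_{p^\infty}$, so setting $A_i:=G_i^*$ the hypothesis becomes $C_{p^\infty}\oplus A_1\cong C_{p^\infty}\oplus A_2$; since Pontryagin duality is a contravariant equivalence of categories it suffices to prove $A_1\cong A_2$.

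For the final step I would appeal to the classical splitting $A_i=D(A_i)\oplus R(A_i)$ of an arbitrary abelian group into its unique maximal divisible subgroup $D(A_i)$ and a reduced complement $R(A_i)\cong A_i/D(A_i)$; both summands are determined by $A_i$ up to isomorphism. For a $p$-primary group one has $D(A_i)\cong C_{p^\infty}^{(\kappa_i)}$ for a cardinal $\kappa_i=\dim_{\F_p}D(A_i)[p]$. Comparing maximal divisible subgroups on the two sides of $C_{p^\infty}\oplus A_1\cong C_{p^\infty}\oplus A_2$ yields $C_{p^\infty}^{(\kappa_1+1)}\cong C_{p^\infty}^{(\kappa_2+1)}$, hence $\kappa_1=\kappa_2$ and $D(A_1)\cong D(A_2)$; comparing reduced quotients yields $R(A_1)\cong R(A_2)$ directly. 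Combining, $A_1\cong A_2$, and dualising back gives $G_1\cong G_2$.

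No step is a real obstacle; the only point that deserves a line of justification is the cardinal cancellation $\kappa_1+1=\kappa_2+1\Rightarrow\kappa_1=\kappa_2$, which is routine by separating into the cases where both cardinals are finite and where at least one is infinite. The overall structure is thus a clean chain of reductions, with Pontryagin duality doing the real work of converting a profinite problem into a classical one about discrete abelian $p$-groups.
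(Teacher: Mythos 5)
Your proof is correct, but it follows a genuinely different route from the one in the paper. The paper argues directly inside the common group $G = H_1 \times G_1 = H_2 \times G_2$ (with $H_1 \cong H_2 \cong \Z_p$): setting $H = G_1 \cap G_2$, either $H$ equals one of the $G_i$, in which case a surjection $\Z_p \cong G/G_1 \twoheadrightarrow G/G_2 \cong \Z_p$ forces $G_1 = G_2$; or $H$ is proper in both, in which case each $G_i/H \cong \Z_p$ and, since $\Z_p$ is a free (hence projective) pro-$p$ group, the quotient map $G_i \twoheadrightarrow G_i/H$ splits and commutativity gives $G_i \cong H \times \Z_p$ for both $i$. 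Your argument instead reduces to the pro-$p$ case via the Sylow decomposition and then passes through Pontryagin duality, turning the problem into cancellation of $C_{p^\infty}$ in discrete $p$-primary abelian groups, which you settle by comparing the (canonical) maximal divisible subgroup and reduced quotient on each side and invoking the classification of divisible $p$-groups plus cardinal cancellation. The paper's proof is shorter and self-contained, relying only on projectivity of $\Z_p$ and a Hopfian-type observation, whereas yours buys clarity from the classical structure theory and makes the role of divisibility explicit; it also makes the mechanism of cancellation (two canonical isomorphism invariants, $D(A)$ and $R(A)$, behaving additively under $\oplus$) very transparent. Both are valid; the minor point worth making explicit in your write-up is that the cardinal identity $\kappa_1 + 1 = \kappa_2 + 1 \Rightarrow \kappa_1 = \kappa_2$ holds because the two cardinals are simultaneously finite or simultaneously infinite, exactly as you note.
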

	\begin{proof}
		Let $G=H_1\times G_1=H_2\times G_2$ be a profinite group having two inner decompositions as a direct product, such that $H_1 \cong H_2\cong \Z_p$. Let $H=G_1\cap G_2$.
		
		First case: Without loss of generality $H=G_1$. Then $\Z_p\cong G/G_1$ naturally projects over $G/G_2\cong \Z_p$. However, every projection of $\Z_p$ over itself is an isomorphism. Hence $G_1=G=2$.
		
		Second case: $H$ is a proper subgroup of $G_1$ and $G_2$. Observe that $G_i/H\cong \Z_p$ for $i=1,2$. Indeed, $G_i/H\cong G_iG_{3-i}/G_{3-i}\leq G/G_{3-i}\cong \Z_p$. As $G_i/H$ is nontrivial we conclude that $G_i/H\cong \Z_p$. Since $\Z_p$ is free, the projections $G_i\to G_i/H\cong \Z_p$ split. As the groups $G_i$ are abelian, we conclude that that $G_i\cong H\times \Z_p$ for $i=1,2$. In particular, $G_1\cong G_2$.
	\end{proof}
	\section{A characterisation of small profinite groups} \label{small groups}
	A key ingredient in our proof of Theorem \ref{Main Theorem} is Segal's characterisation of small profinite groups, Theorem \ref{segal characterization of small groups}. In the proof of Proposition \ref{f.g groups are cancellable} we use another property of small profinite groups - the fact that they are determined by their finite quotients. Our final result is that this property is in fact equivalent to being small. For a profinite group $G$, by $\operatorname{fin}(G)$ we denote set of isomorphism classes of the images $G/U$ as $U$ varies over the open normal subgroups of $G$.
	\begin{thm} \label{small}
		A profinite group $G$ is small if and only if $G$ satisfies the following property: Whenever $H$ is a profinite group such that $\operatorname{fin}(G)=\operatorname{fin}(G)$, then $G\cong H$.
	\end{thm}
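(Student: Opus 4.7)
The theorem is an equivalence, which I would prove in two directions.

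For the forward direction, suppose $G$ is small. Then by Theorem \ref{segal characterization of small groups} the family $\{\overline{G^n}\}_n$ is a neighbourhood base at the identity with every $G/\overline{G^n}$ finite, so $G\cong\varprojlim_n G/\overline{G^n}$. I would recover $G/\overline{G^n}$ canonically from $\operatorname{fin}(G)$ as the maximum of $\{Q\in\operatorname{fin}(G):Q^n=\{e\}\}$ under ``is a quotient of'': for any two such $Q_i=G/N_i$ the subdirect product $G/(N_1\cap N_2)$ again lies in this set and projects onto both, so the set is directed; smallness makes $G/\overline{G^n}$ itself finite, and since every element of the set is a quotient of $G/\overline{G^n}$, this is the unique maximum. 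The transition maps between these characteristic quotients are canonical (the unique surjections $G/\overline{G^{nn'}}\twoheadrightarrow G/\overline{G^n}$), so $\operatorname{fin}(G)$ determines the entire inverse system. This is essentially \cite[Proposition 16.10.7]{fried2008field}, already invoked in the proof of Proposition \ref{f.g groups are cancellable}.

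For the converse, suppose $G$ is not small. By Theorem \ref{segal characterization of small groups} there is $m\in\mathbb N$ with $\bar G:=G/\overline{G^m}$ infinite. My plan is to produce a profinite group $H\not\cong G$ with $\operatorname{fin}(H)=\operatorname{fin}(G)$ by ``inflating'' the bounded-exponent quotient $\bar G$ to a larger cardinality. Concretely, I would find a profinite group $\bar G'$ of exponent dividing $m$ with $\operatorname{fin}(\bar G')=\operatorname{fin}(\bar G)$ and $|\bar G'|>|\bar G|$ (the prototypical move being the replacement of $\mathbb F_p^{\aleph_0}$ by $\mathbb F_p^{\aleph_1}$), and then construct $H$ as an extension of $\bar G'$ by $\overline{G^m}$ modelled on the canonical sequence $1\to\overline{G^m}\to G\to\bar G\to 1$. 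The cardinality jump forces $H\not\cong G$, while $\operatorname{fin}(H)=\operatorname{fin}(G)$ should hold because the isomorphism types of finite quotients depend only on the isomorphism classes of $\overline{G^m}$ and $\bar G$, not on their cardinalities.

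The main obstacle is ensuring $\operatorname{fin}(H)=\operatorname{fin}(G)$ precisely, rather than merely $\operatorname{fin}(H)\supseteq\operatorname{fin}(G)$. The naive candidate $H=G\times\bar G$ can fail in this respect: for $G=\mathbb Z_p\times\mathbb F_p^{\aleph_0}$, the product $G\times\bar G$ acquires $(\mathbb Z/p^2)^2$ as a finite quotient, while $(\mathbb Z/p^2)^2\notin\operatorname{fin}(G)$. Avoiding such pathologies forces one to inflate only the portion of $G$ on which $\bar G$ lives, which requires careful tracking of how $\overline{G^m}$ sits inside $G$ and of the compatibility of the inflating extension with the original.
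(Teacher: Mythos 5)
Your forward direction is fine and matches the paper: smallness gives a canonical cofinal tower $G/\overline{G^n}$ recoverable from $\operatorname{fin}(G)$, which is exactly the content of \cite[Proposition~16.10.7]{fried2008field} that the paper also invokes.

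For the converse, however, you have correctly diagnosed the obstruction but not overcome it, and the gap is real. Your plan needs two things you do not supply: (a) a profinite group $\bar G'$ of the same exponent with $\operatorname{fin}(\bar G')=\operatorname{fin}(\bar G)$ but strictly larger cardinality, and (b) an extension of $\bar G'$ by $\overline{G^m}$ whose \emph{entire} set of finite images agrees with that of $G$. Step (a) is unclear for a general bounded-exponent profinite group (your $\mathbb F_p^{\aleph_0}\leadsto\mathbb F_p^{\aleph_1}$ example works because the group is elementary abelian, i.e.\ a semisimple module, so one can just change the index set; there is no analogous recipe for an arbitrary $\bar G$). Step (b) is exactly where your $G\times\bar G$ example fails, and gluing $\bar G'$ back onto $\overline{G^m}$ by ``modelling'' on the exact sequence does not by itself control the finite quotients: a finite image of the extension need not split compatibly into a piece coming from $\overline{G^m}$ and a piece coming from $\bar G'$.

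The paper gets around both problems at once by not working with the coarse quotient $G/\overline{G^m}$ at all. Instead it invokes Helbig's structure theorem \cite{helbig} for non-small profinite groups: there are open normal subgroups $G_0>M_0$ with $T=G_0/M_0$ a minimal normal subgroup of $G/M_0$, and infinitely many $M_i<G_0$ with $G/M_i\cong G/M_0$ inducing the identity on $G/G_0$. Setting $L=\bigcap M_i$, the section $G_0/L\cong T^{\mathbb N}$ is semisimple, and this is the precise place where inflation is legal. The replacement group is the fibre product
\[ G_J=\{(g,(g_j)_{j\in J})\in G\times (G/M_0)^J : g_j\equiv g \bmod G_0\}, \]
and the semisimplicity of $G_0/L$ (so that $T^m$-sections have $G$-invariant complements) is what makes the computation $\operatorname{fin}(G_J)=\operatorname{fin}(G)$ go through; taking $|J|>|G|$ then forces $G_J\not\cong G$. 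In short, your plan is in the right spirit but you inflate too crude a quotient; without Helbig's theorem to isolate a semisimple section you cannot verify the crucial inclusion $\operatorname{fin}(H)\subseteq\operatorname{fin}(G)$, which is precisely the obstacle you flagged.
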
 
	\begin{proof}
		$\Rightarrow$ As mentioned above, this is precisely \cite[Proposition 16.10.7]{fried2008field}.
		
		$\Leftarrow$ For the converse suppose that $G$ is not small. Then as shown in \cite{helbig} there exist open normal subgroups $G_0>M_0$ of $G$ such that $T:=G_0/M_0$ is a minimal normal subgroup of $G/M_0$ and moreover there is an infinite collection $(M_i)_{i=0}^\infty$ of open normal subgroups of $G$ with $M_i <G_0$ and $G/M_i$ isomorphic to $G/M_0$ via an isomorphism inducing the identity on $G/G_0$. Let $L= \cap_{i=0}^\infty M_i$ and note that $G_0/L \cong T^\mathbb N$ is either a semisimple group or a semisimple $G/G_0$-module which is a Cartesian power of the simple $G/G_0$-module $T$. Note that in both cases any $G$-invariant subgroup or quotient of $L$ is also a Cartesian power of $T$.
		
		For a set $J$ finite or infinite we define
		\[ G_J:= \{ (g,(g_j)_{j \in J}) \in G \times (G/M_0)^J \ | \ g_j \equiv g \textrm{ mod } G_0 \quad \forall j \in J \}. \]
		
		The Theorem will follow from 
		\begin{prop} \label{pr} The groups $G$ and $G_J$ have the same set of finite images.
		\end{prop}
		\begin{proof} It is enough to show $\mathrm{Im}(G_J) \subseteq \mathrm{Im}(G)$ since the reverse inclusion is clear. Every finite topological image of $G_J$ s an image of 
			\[ \bar G_{n,N}:= \{ (gN ,(g_j)_{j=1}^n) \in G/N \times (G/M_0)^n \ | \ g_j \equiv g \textrm{ mod } G_0 \quad j=1, \ldots, n \}, \]
			for some integer $n$ and some open normal subgroup $N$ of $G$ with $N \leq G_0$.  It is sufficient to show that $\bar G_{n,N}$ occurs as a finite topological quotient of $G$. 
			
			Consider the the open normal subgroup $NL$ of $G$. We have $G_0 \geq NL > L$ with $G_0/L = T^\mathbb N$, therefore  $NL/L$ is an infinite product of copies of $T$ and hence there exists an open normal subgroup $N_1$ of $G$ with $N_1<N$ and $NL/N_1L \cong T^m$ for some $m \geq n$. We can replace $G$ with its finite image $G/N_1$ and from now on assume that $N_1=1$ and $G_0/L \cong T^s$ and $NL/L \cong T^m$ for some integer $s \geq m$. The semisimplicity of $G_0/L$ implies that $NL/L$ has a $G$-invariant complement $K/L$ in $G_0/L$ for some normal subgroup $G_0 \geq K \geq L$ of $G$ with $KN=G_0$ and $K \cap NL = L$. Again, replacing $G$ by $G/(L \cap N)$ we may assume from now on that $L \cap N=\{1\}$.
			Note that $G_0/K \cong LN/L \cong N \cong T^m$ and therefore \begin{equation} \label{K}  G/K \cong \{ (g_i)_{i=1}^m \in (G/M_0)^m \ | \ g_iG_0=g_jG_0 \ \forall \ 1 \leq i, j \leq m  \}\end{equation}
			
			We claim that $G_0 \cong K \times N$.
			Indeed $KN=G_0$ follows from the definition of $K$, whereas $K \cap N \leq (K \cap LN) \cap N\leq L \cap N=1$. 
			
			Let \[ f: G \rightarrow (G/N) \times (G/K) \] be the homomorphism $f(g)=(gN,gK)$ for all $g \in G$. Since $K \cap N=1$ we have that $f$ is injective and in view of (\ref{K}) and $|G|=|G:N||N|=|G:N||T|^m$ we deduce that image of $f(G)$ is precisely the group $\bar G_{m,N}$. Since $\bar G_{m,N}$ maps onto $\bar G_{n,N}$ for all $n \leq m$ Proposition \ref{pr} follows. \end{proof}
		The proof of Theorem \ref{small} is complete. 
		
	\end{proof}

	\bibliographystyle{plain}

\begin{thebibliography}{100}
		
		\bibitem{fried2008field}Fried, M., Jarden, M. \& Others Field arithmetic. (Springer,2008)
		
		
		\bibitem{hall2018theory}Hall, M. The theory of groups. (Courier Dover Publications,2018)
		
		\bibitem{helbig} P. Helbig, On small profinite groups, J. Group Theory, 20, no. 5, (2017), pp. 987-997.
		
		
		
		\bibitem{hirshon1977some}Hirshon, R. Some cancellation theorems with applications to nilpotent groups. {\em Journal Of The Australian Mathematical Society}. \textbf{23}, 147-165 (1977)
		
		\bibitem{hirshon1970cancellation}Hirshon, R. Cancellation of groups with maximal condition. {\em Proceedings Of The American Mathematical Society}. \textbf{24}, 401-403 (1970)
		
		\bibitem{hirshon1969cancellation}Hirshon, R. On cancellation in groups. {\em The American Mathematical Monthly}. \textbf{76}, 1037-1039 (1969)
		
		
		\bibitem{hirshon1975cancellation}Hirshon, R. The cancellation of an infinite cyclic group in direct products. {\em Archiv Der Mathematik}. \textbf{26} pp. 134-138 (1975)
		
		
		\bibitem{kaplansky2018infinite}Kaplansky, I. Infinite abelian groups. (Courier Dover Publications,2018)
		
		
		
		\bibitem{maclagan1909direct}Maclagan-Wedderburn, J. On the direct product in the theory of finite groups. {\em Annals Of Mathematics}. \textbf{10}, 173-176 (1909)
		
		\bibitem{oger1983cancellation}Oger, F. Cancellation and elementary equivalence of groups. {\em Journal Of Pure And Applied Algebra}. \textbf{30}, 293-299 (1983)
		
		\bibitem{ribes2017profinite}Ribes, L. \& Others Profinite graphs and groups. (Springer,2017)
		
		\bibitem{ribes2000profinite}Ribes, L., Zalesskii, P., Ribes, L. \& Zalesskii, P. Profinite groups. (Springer,2000)
		
		\bibitem{rotman2012introduction}Rotman, J. An introduction to the theory of groups. (Springer Science \& Business Media,2012)
		
		\bibitem{segal2018remarks}Segal, D. Remarks on profinite groups having few open subgroups. {\em Journal Of Combinatorial Algebra}. \textbf{2}, 87-101 (2018)
		
		\bibitem{smith2003subgroups}Smith, M. \& Wilson, J. On subgroups of finite index in compact Hausdorff groups. {\em Archiv Der Mathematik}. \textbf{80} pp. 123-129 (2003)
		
		\bibitem{steinberg1962generators}Steinberg, R. Generators for simple groups. {\em Canadian Journal Of Mathematics}. \textbf{14} pp. 277-283 (1962)
		
		
		
		
		
		\bibitem{walker1956cancellation}Walker, E. Cancellation in direct sums of groups. {\em Proceedings Of The American Mathematical Society}. \textbf{7}, 898-902 (1956)
		
		
		
		
		
		
	\end{thebibliography}
	
\end{document}